\newtheorem{theorem}{Theorem}
\newtheorem{corollary}{Corollary}
\newtheorem{lemma}{Lemma}
\newtheorem{proposition}{Proposition}
\newtheorem{remark}{Remark}
\newcommand{\eps}{\varepsilon}
\DeclareMathAlphabet{\mathpzc}{OT1}{pzc}{m}{it}
\DeclareMathOperator{\E}{E}
\DeclareMathOperator{\essinf}{ess\;inf}
\DeclareMathOperator{\re}{Re}
\DeclareMathOperator{\fix}{Fix}
\DeclareMathOperator{\F}{F}
\DeclareMathOperator{\h}{H}
\DeclareMathOperator{\Ka}{k}
\DeclareMathOperator{\co}{co}
\DeclareMathOperator{\im}{Im}
\newcommand{\w}{\tilde}
\newcommand{\map}{\multimap}
\newcommand{\<}{\leqslant}
\newcommand{\n}{{n\geqslant 1}}
\newcommand{\K}{{k\geqslant 1}}
\newcommand{\R}[1]{\varmathbb{R}^{#1}}
\begin{document}

\title[On the properties of the solution set map to Volterra integral inclusion]{On the properties of the solution set map to Volterra integral inclusion}
\author{Rados\l aw Pietkun}
\subjclass[2010]{45D05, 45G10, 45N05, 47H30, 54C60, 54C65}
\keywords{integral inclusion, solution set map, $R_\delta$-set, absolute retract, acyclicity, continuous selection}
\address{Toru\'n, Poland}
\email{rpietkun@pm.me}

\begin{abstract}
For the multivalued Volterra integral equation defined in a Banach space, the set of solutions is proved to be $R_\delta$, without auxiliary conditions imposed in \cite[Theorem 6.]{pietkun}. It is shown that the solution set map, corresponding to this Volterra integral equation, possesses a continuous singlevalued selection. The image of a convex set under solution set map is acyclic. The solution set to Volterra integral inclusion in a separable Banach space and the preimage of this set through the Volterra integral operator are shown to be absolute retracts.  
\end{abstract}
\maketitle
\section{Introduction}
In \cite{pietkun}, the author conducted the study of geometric properties of the solution set to the following Volterra integral inclusion
\begin{equation}\label{inclusion}
x(t)\in h(t)+\int_0^tk(t,s)F(s,x(s))\,ds,\;t\in I=[0,T]
\end{equation}
in a Banach space $E$. In inclusion \eqref{inclusion}, $h\in C(I,E)$, $k(t,s)\in{\mathcal L}(E)$ and $F\colon I\times E\map E$ was a convex valued perturbation. It was proven that the solution set $S_{\!F}^p(h)$ of integral inclusion \eqref{inclusion} is acyclic in the space $C(I,E)$ or is even $R_\delta$, provided some additional conditions on the Banach space $E$ or the kernel $k$ and perturbation $F$ are imposed. We will show that these auxiliary assumptions are redundant. Thus, the present work complements \cite{pietkun} by strengthening Theorems 6. and 7. In Section 3, we give also some applications of investigated properties of the solution set to problem \eqref{inclusion}. One of them is the characterization of the solution set of an evolution inclusion in the so-called parabolic case. The second is the result on the existence of periodic trajectories of the integral inclusion under consideration.\par Introducing the so-called {\em solution set map} $S_{\!F}^p\colon C(I,E)\map C(I,E)$, which associates with each inhomogeneity $h\in C(I,E)$ the set of all solutions to \eqref{inclusion}, we prove that under generic assumptions on $F$ this multimap possesses a continuous singlevalued selection. In this aim we adapt a well-known construction taken from \cite{colo}.\par Already singlevalued examples show that no more than connectedness can be expected about the image $f(M)$ of a connected set $M$ under continuous $f$. Also in the case of the solution set map is clear only that the set $\bigcup_{h\in M}S_{\!F}^p(h)$ is connected, if $M\subset C(I,E)$ is connected. We exploit the admissibility of the solution set map and the result of Vietoris to demonstrate that the image of a compact convex $M\subset C(I,E)$ through $S_{\!F}^p$ must be acyclic.\par Since the solutions of inclusion \eqref{inclusion} are understood in the sense of Aumann integral, it is natural to examine the issue of geometric structure of the set of these integrable selections of perturbation $F$, which make up the solution set $S_{\!F}^p(h)$ being mapped by the Volterra integral operator. It has been shown that these selections form a retract of the space $L^p(I,E)$. In the context of stronger assumptions about Volterra integral operator kernel, the solution set $S_{\!F}^p(h)$ turns out to be also an absolute retract.
\section{Preliminaries}
Denote by $I$ the interval $[0,T]$ and by $\Sigma$ the $\sigma$-algebra of Lebesgue measurable subsets of $I$. Let $E$ be a real Banach space with the norm $|\cdot|$ and ${\mathcal B}(E)$ the family of Borel subsets of $E$. The space of bounded linear endomorphisms of $E$ is denoted by ${\mathcal L}(E)$ and $E^*$ stands for the normed dual of $E$. Given $S\in{\mathcal L}(E)$, $||S||_{\mathcal L}$ is the norm of $S$. The closure and the closed convex envelope of $A\subset E$ will be denoted by $\overline{A}$ and $\overline{\co} A$ and if $x\in E$ we set $d(x,A)=\inf\{|x-y|\colon y\in A\}$. Besides, for two nonempty closed bounded subsets $A, B$ of $E$ we denote by $d_H(A,B)$ the Hausdorff distance from $A$ to $B$, i.e. $d_H(A,B)=\max\{\sup\{d(x,B)\colon x\in A\},\sup\{d(y,A)\colon y\in B\}\}$.\par By $(C(I,E),||\cdot||)$ we mean the Banach space of continuous maps $I\to E$ equipped with the maximum norm. Let $1\<p<\infty$. Then $(L^p(I,E),||\cdot||_p)$ is the Banach space of all (Bochner) $p$-integrable maps $w\colon I\to E$, i.e. $w\in L^p(I,E)$ iff f is strongly measurable and \[||w||_p=\left(\int_0^T|w(t)|^p\,dt\right)^{\frac{1}{p}}<\infty.\] Notice that strong measurability is equivalent to the usual measurability in case $E$ is separable.\par Recall that a subset $K$ of $L^1(I,E)$ is called decomposable if for every $u, v\in K$ and every $A\in\Sigma$, we have $u{\bf 1}_A+v{\bf 1}_{I\setminus A}\in K$, where ${\bf 1}_A$ stands for the characteristic function of $A$.
\par A set-valued map $F\colon E\map E$ assigns to any $x \in E$ a nonempty subset $F(x)\subset E$. $F$ is upper (lower) semicontinuous, if the small inverse image $F^{-1}(A)=\{x\in E\colon F(x)\subset A\}$ is open (closed) in $E$ whenever $A$ is open (closed) in $E$. A map $F\colon E\map E$ is upper hemicontinuous if for each $p\in E^*$, the function $\sigma(p,F(\cdot))\colon E\to\R{}\cup\{+\infty\}$ is upper semicontinuous as an extended real function, where $\sigma(p,F(x))=\sup_{y\in F(x)}\langle p,y\rangle$. Let $X$ be a separable metric space and ${\mathcal A}$ be a $\sigma$-algebra of subsets of $X$. The map $F\colon X\map E$ is said to be ${\mathcal A}$-measurable if for every open $C\subset X$, we have $F^{-1}(C)\in{\mathcal A}$. A function $f\colon X\to E$ such that $f(x)\in F(x)$ for every $x\in X$ is called a selection of $F$.\par By $H_*(\cdot)$ we denote the \v Cech homology functor with coefficients in the field of rational numbers $\varmathbb{Q}$ from the category of compact pairs of metric spaces and continuous maps of such pairs to the category of graded vector spaces over $\varmathbb{Q}$ and linear maps of degree zero. The space $X$ having the property
\[H_q(X)=
\begin{cases}
0&\mbox{for }q\geqslant 1,\\
\varmathbb{Q}&\mbox{for }q=0
\end{cases}\]
is called acyclic. In other words its homology are exactly the same as the homology of a one point space. A compact (nonempty) space $X$ is an $R_\delta$-set if there is a decreasing sequence of contractible compacta $(X_n)_\n$ containing $X$ as a closed subspace such that $X=\bigcap_\n X_n$. From the continuity of the homology functor $H_*(\cdot)$ follows that $R_\delta$-sets are acyclic.\par We say that a set-valued map $F\colon E\map M$, where $M$ is a metric space, is a $J$-map , if $F$ is upper semicontinuous and the set $F(x)$ is $R_\delta$. An upper semicontinuous map $F\colon E\map M$ is called acyclic if it has compact acyclic values. Let $U$ be an open subset of $E$. A map $\Phi\colon U\map E$ is called decomposable, if there is a $J$-map $F\colon U\map M$, with $M$ being a metric ANR, and a single-valued continuous $f\colon M\to E$ such that $\Phi=f\circ F$. A set-valued map $F\colon E\map M$ is admissible (compare \cite[Def.40.1]{gorn}) if there is a metric space $X$ and two continuous functions $p\colon X\Rightarrow E$, $q\colon X\to M$ from which $p$ is a Vietoris map such that $F(x)=q(p^{-1}(x))$ for every $x\in E$. Clearly, every $J$-map, acyclic map or decomposable map is admissible.\par Finally, a real function $\chi$ defined on the family of bounded subsets of $E$ is called the Hausdorff measure of noncompactness if \[\chi(\Omega):=\inf\{\eps>0:\Omega\mbox{ admits a finite covering by balls of a radius }\eps\}.\] \par Assume that $p$ is a real number from the interval $[1,\infty)$. Throughout this paper it is assumed that $q\in(1,\infty]$ satisfies $q^{-1}+p^{-1}=1$. We will say that a multimap $F\colon I\times E\map E$ satisfies $({\bf F})$ if the following hypotheses are satisfied:
\begin{itemize}
\item[$(\F_1)$] for every $(t,x)\in I\times E$ the set $F(t,x)$ is nonempty, closed and convex,
\item[$(\F_2)$] the map $F(\cdot,x)$ has a strongly measurable selection for every $x\in E$,
\item[$(\F_3)$] the map $F(t,\cdot)$ is upper hemicontinuous for a.a. $t\in I$,
\item[$(\F_4)$] there is $c\in L^p(I,\R{})$ such that $||F(t,x)||^+=\sup\{|y|\colon y\in F(t,x)\}\<c(t)(1+|x|)$ for a.a. $t\in I$ and for all $x\in E$,
\item[$(\F_5)$] there is a function $\eta\in L^p(I,\R{})$ such that for all bounded subsets $\Omega\subset E$ and for a.a. $t\in I$ the inequality holds \[\chi(F(t,\Omega))\<\eta(t)\chi(\Omega).\]
\end{itemize}
We shall say that $F\colon I\times E\map E$ fulfills $({\bf H})$ if the following assumptions are satisfied:
\begin{itemize}
\item[$(\h_1)$] the set $F(t,x)$ is nonempty closed and bounded for every $(t,x)\in I\times E$, 
\item[$(\h_2)$] the map $F(\cdot,x)$ is $\Sigma$-measurable for every $x\in E$,
\item[$(\h_3)$] there exists $\alpha\in L^p(I,\R{})$ such that $d_H(F(t,x),F(t,y))\<\alpha(t)|x-y|$, for all $x,y\in E$, a.e. in $I$,
\item[$(\h_4)$] there exists $\beta\in L^p(I,\R{})$ such that $d(0,F(t,0))\<\beta(t)$, a.e. in $I$.
\end{itemize}
\par Denote by $\bigtriangleup$ the set $\{(t,s)\in I\times I\colon 0\<s\<t\<T\}$. We shall also assume that the mapping $k\colon\bigtriangleup\to{\mathcal L}(E)$ possesses the following properties:
\begin{itemize}
\item[$(\Ka_1)$] the function $k(\cdot,s)\colon[s,T]\to{\mathcal L}(E)$ is differentiable for every $s\in I$,
\item[$(\Ka_2)$] the function $k(t,\cdot)\colon[0,t]\to{\mathcal L}(E)$ is continuous for all $t\in I$,
\item[$(\Ka_3)$] the function $k(\cdot,\cdot)\colon\{(t,t)\colon t\in I\}\to{\mathcal L}(E)$ is continuous, whereas the operator $k(t,t)$ is invertible for all $t\in I$,
\item[$(\Ka_4)$] there exists $\mu\in L^q(I,\R{})$ such that for every $(t,s)\in\bigtriangleup$ we have $\left\Arrowvert\frac{\partial}{\partial t}k(t,s)\right\Arrowvert_{\mathcal L}\<\mu(s)$,
\item[$(\Ka_5)$] for every $t\in I$, $k(t,\cdot)\in L^q([0,t],{\mathcal L}(E))$,
\item[$(\Ka_6)$] the function $I\ni t\mapsto k(t,\cdot)\in L^q([0,t],{\mathcal L}(E))$ is continuous in the norm $||\cdot||_q$ of the space $L^q(I,{\mathcal L}(E))$.
\end{itemize}
\par  By a solution of the Volterra integral inclusion \eqref{inclusion} we mean a function $x\in C(I,E)$, which satisfies equation
\[x(t)=h(t)+\int_0^tk(t,s)w(s)\,ds,\;t\in I\]
for some $w\in L^p(I,E)$ such that $w(t)\in F(t,x(t))$ for a.a. $t\in I$. Obviously, the set of all solutions to integral inclusion under consideration coincides with the set of fixed points $\fix(h+V\circ N_F^p(\cdot))$ of the multivalued operator $h+V\circ N_F^p\colon C(I,E)\map C(I,E)$, where  $N_F^p\colon C(I,E)\map L^p(I,E)$ is the Nemtyskij operator corresponding to $F$, given by
\[N_F^p(x)=\{w\in L^p(I,E)\colon w(t)\in F(t,x(t))\mbox{ for a.a. }t\in I\}\]
and $V\colon L^p(I,E)\to C(I,E)$ is the Volterra integral operator, defined by
\[V(w)(t)=\int_0^t k(t,s)w(s)ds,\;t\in I.\]
The eponymous {\em solution set map} is the multivalued operator $S^p_{\!F}\colon C(I,E)\map C(I,E)$ given by the formula
\[S^p_{\!F}(h):=\left\{x\in C(I,E)\colon x\in h+V\circ N_F^p(x)\right\}=\fix(h+V\circ N_F^p(\cdot)).\]
\par The core reasoning, which support proofs of the results in \cite{pietkun}, on both the topological properties (nonemptiness, compactness) and geometric (acyclicity) of the solution set $S^p_{\!F}(h)$, constitutes the so-called convergence theorem for convex valued upper hemicontinuous multimaps. In order to improve these results, we will need the following ``multivalued version'' of this theorem.
\begin{theorem}\label{convergence}
Assume that $F\colon E\map E$ is upper hemicontinuous and $G\colon I\map E$ has compact values. If sequences $(G_n\colon I\map E)_\n$ and $(y_n\colon I\to E)_\n$ satisfy the following conditions 
\begin{itemize}
\item[(i)] $d_H(G_n(t),G(t))\underset{n\to\infty}{\longrightarrow}0$ for a.a. $t\in I$,
\item[(ii)] $y_n\rightharpoonup y$ in the space $L^p(I,E)$, where $p\geqslant 1$,
\item[(iii)] $y_n(t)\in\overline{\co}B(F(B(G_n(t),\eps_n)),\eps_n)$ for a.a. $t\in I$, where $\eps_n\to 0^+$ as $n\to\infty$,
\end{itemize}
then $y(t)\in\overline{\co}F(G(t))$ for a.a. $t\in I$.
\end{theorem}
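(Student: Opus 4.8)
The plan is to characterise membership in the closed convex set $\overline{\co}F(G(t))$ through its support function and to verify the resulting family of scalar inequalities pointwise in $t$. By the Hahn--Banach separation theorem, $y(t)\in\overline{\co}F(G(t))$ as soon as $\langle p,y(t)\rangle\<\sigma(p,F(G(t)))$ for every $p\in E^*$, since $\sigma(p,\overline{\co}F(G(t)))=\sigma(p,F(G(t)))$ and the set is nonempty ($F$ being nonempty valued and $G(t)$ nonempty compact). The decisive preliminary step is to replace the weak convergence (ii) by pointwise information: applying Mazur's lemma to $(y_n)$ produces convex combinations $z_n=\sum_{i\geqslant n}\lambda_i^{(n)}y_i$, with only indices $i\geqslant n$ occurring, such that $z_n\to y$ strongly in $L^p(I,E)$; passing to a subsequence we may assume $z_n(t)\to y(t)$ in $E$ for a.a. $t$. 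I emphasise that this reduction is what makes the argument work in an arbitrary Banach space: had we exploited (ii) directly, each functional $p$ would contribute its own null set, and these cannot in general be amalgamated by restricting to countably many $p$, because $E^*$ need not be norm separable. Strong a.e. convergence of $(z_n)$, by contrast, yields $\langle p,z_n(t)\rangle\to\langle p,y(t)\rangle$ for every $p$ simultaneously at each good $t$.

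Next I would fix a $t$ outside the single, $p$-independent null set on which either (i), or (iii), or the convergence $z_n(t)\to y(t)$ fails, and estimate support functions. Taking $\sigma(p,\cdot)$ of the inclusion in (iii) and using $\sigma(p,\overline{\co}A)=\sigma(p,A)$, $\sigma(p,B(A,\eps))=\sigma(p,A)+\eps||p||$ and $\sigma(p,F(A))=\sup_{x\in A}\sigma(p,F(x))$ gives, for every $i$ and every $p\in E^*$,
\[\langle p,y_i(t)\rangle\<\sigma\bigl(p,F(B(G_i(t),\eps_i))\bigr)+\eps_i||p||=:b_i(t,p).\]
Since $B(G_i(t),\eps_i)\subset B(G(t),\delta_i)$ with $\delta_i:=d_H(G_i(t),G(t))+\eps_i\to 0$ by (i), monotonicity of the support function yields $b_i(t,p)\<\sup_{x\in B(G(t),\delta_i)}\sigma(p,F(x))+\eps_i||p||$. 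The technical heart of the proof, and the step I expect to be the main obstacle, is the limit
\[\limsup_{i\to\infty}\;\sup_{x\in B(G(t),\delta_i)}\sigma(p,F(x))\<\sigma(p,F(G(t))),\]
which rests on the compactness of $G(t)$ together with the upper semicontinuity of $x\mapsto\sigma(p,F(x))$ guaranteed by upper hemicontinuity of $F$: any sequence $x_i\in B(G(t),\delta_i)$ lies within $\delta_i$ of $G(t)$, so a subsequence converges to a point of $G(t)$, and upper semicontinuity bounds the limsup of the values by the supremum over $G(t)$. Hence $\limsup_i b_i(t,p)\<\sigma(p,F(G(t)))$.

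Finally I would assemble the pieces. For the fixed good $t$ and any $p\in E^*$, convexity gives $\langle p,z_n(t)\rangle=\sum_{i\geqslant n}\lambda_i^{(n)}\langle p,y_i(t)\rangle\<\sup_{i\geqslant n}b_i(t,p)$; letting $n\to\infty$ the left side tends to $\langle p,y(t)\rangle$ while the right side tends to $\limsup_i b_i(t,p)\<\sigma(p,F(G(t)))$. Thus $\langle p,y(t)\rangle\<\sigma(p,F(G(t)))$ for every $p\in E^*$, and the Hahn--Banach characterisation recorded above yields $y(t)\in\overline{\co}F(G(t))$. As the good set has full measure, the conclusion holds for a.a. $t\in I$. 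The only genuinely delicate point is the interchange of the limit in $i$ with the enlargement of the argument set, that is the upper-semicontinuity estimate over shrinking neighbourhoods of the compact set $G(t)$; everything else is bookkeeping with support functions and with the Mazur combinations.
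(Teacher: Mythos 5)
Your proposal is correct and follows essentially the same route as the paper's proof: the support-function characterisation of $\overline{\co}F(G(t))$, Mazur's lemma to convert weak $L^p$-convergence into a.e.\ pointwise convergence of convex combinations (with a single null set independent of the functional), and compactness of $G(t)$ combined with upper hemicontinuity to show that the support functions over shrinking neighbourhoods of $G(t)$ are asymptotically dominated by $\sigma(p,F(G(t)))$. The only cosmetic difference is that the paper obtains the uniform $\delta$ via a finite subcover while you argue by sequential compactness and contradiction; these are interchangeable.
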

\begin{proof}
There is a subset $I_1$ of full measure in $I$ such that (i) and (iii) hold. Take $t\in I_1$. Let $\eps>0$ and $e^*\in E^*\setminus\{0\}$ be arbitrary. Using the upper hemicontinuity of $F$ we see that for every $x\in G(t)$ there is $\delta_x>0$ such that
\[\sigma(e^*,F(B(x,\delta_x)))<\sigma(e^*,F(x))+\frac{\eps}{2}.\] Since $G(t)$ is compact, we have $x_1,\ldots,x_m\in G(t)$ such that $G(t)\subset\bigcup_{i=1}^m B(x_i,\delta_{x_i}/2)$. Put $\delta:=\min\limits_{1\<i\<m}\delta_{x_i}/2$. If $z\in B(G(t),\delta)$, then there is $i\in\{1,\ldots,m\}$ such that \[\sigma(e^*,F(z))<\sigma(e^*,F(x_i))+\frac{\eps}{2}.\] Thus
\[\sigma(e^*,F(B(G(t),\delta)))<\sigma(e^*,F(G(t)))+\frac{\eps}{2}.\] In view of (i) there is an index $N$ such that 
\[B(F(B(G_n(t),\eps_n)),\eps_n)\subset B\left(F(B(G(t),\delta)),\frac{\eps}{2||e^*||}\right)\] for every $n\geqslant N$. Whence
\begin{align*}
\sigma(e^*,&B(F(B(G_n(t),\eps_n)),\eps_n))\<\sigma\left(e^*,B\left(F(B(G(t),\delta)),\frac{\eps}{2||f^*||}\right)\right)\\&=\sigma\left(e^*,F(B(G(t),\delta))+B\left(0,\frac{\eps}{2||e^*||}\right)\right)\<\sigma(e^*,F(B(G(t),\delta)))+\sigma\left(e^*,\frac{\eps}{2||e^*||}\overline{B}(0,1)\right)\\&\<\sigma(e^*,F(G(t)))+\frac{\eps}{2}+\frac{\eps}{2||e^*||}\sigma(e^*,\overline{B}(0,1))=\sigma(e^*,F(G(t)))+\eps
\end{align*}
for $n\geqslant N$. Since $\eps>0$ was arbitrary we see that
\begin{equation}\label{equ1}
\overline{\lim\limits_{n\to\infty}}\sigma(e^*,B(F(B(G_n(t),\eps_n)),\eps_n))\<\sigma(e^*,F(G(t))).
\end{equation}
Since $y_n\rightharpoonup y$ in $L^p(I,E)$, there is a sequence $(z_n)_\n$ strongly convergent to $y$ such that $z_n\in\co\{y_k\}_{k=n}^\infty$. Further, there is a subset $I_2$ of full measure in $I$ and a subsequence (again denoted by) $(z_n)_\n$ pointwise convergent to $y$ for every $t\in I_2$. Let us take $t\in I_1\cap I_2$. Then 
\[z_n(t)\in\co\{y_k(t)\}_{k=n}^\infty\subset\co\bigcup\limits_{k=n}^\infty\overline{\co}B(F(B(G_k(t),\eps_k)),\eps_k)\subset\overline{\co}\bigcup\limits_{k=n}^\infty B(F(B(G_k(t),\eps_k)),\eps_k)\] and
\[\langle e^*,z_n(t)\rangle\<\sigma(e^*,\overline{\co}\bigcup\limits_{k=n}^\infty B(F(B(G_k(t),\eps_k)),\eps_k))=\sup\limits_{k\geqslant n}\sigma(e^*,B(F(B(G_k(t),\eps_k)),\eps_k)).\] As a result we get
\begin{align*}
\langle e^*,y(t)\rangle&=\lim\limits_{n\to\infty}\langle e^*,z_n(t)\rangle\<\lim\limits_{n\to\infty}\sup\limits_{k\geqslant n}\sigma(e^*,B(F(B(G_k(t),\eps_k)),\eps_k)\\&=\overline{\lim\limits_{n\to\infty}}\sigma(e^*,B(F(G_n(t),\eps_n)),\eps_n)).
\end{align*}
Combining this estimation with \eqref{equ1} we see that 
\[\langle e^*,y(t)\rangle\<\sigma(e^*,F(G(t))).\] Bearing in mind that closed convex sets possess the following description \[\overline{\co}F(G(t))=\{v\in E\colon\langle e^*,v\rangle\<\sigma(e^*,\overline{\co}F(G(t)))\mbox{ for every }e^*\in E^*\},\] we conclude finally that $y(t)\in\overline{\co}F(G(t))$ for almost all $t\in I$.
\end{proof}
Taking into account that the initial set of assumptions $({\bf F})$ ensures that the fixed point set $\fix(h+V\circ N_F^p(\cdot))$ is acyclic, one can reverse the problem by asking the question about the structure of $\fix(N_F^p(h+V(\cdot)))$. Thus, it is quite natural to define the {\em selection set map} ${\mathcal S}^p_{\!F}\colon C(I,E)\map L^p(I,E)$, corresponding to the problem \eqref{inclusion}, by \[{\mathcal S}^p_{\!F}(h):=\bigcup_{x\in S^p_{\!F}(h)}\left\{w\in N_F^p(x)\colon x=h+V(w)\right\}=\fix\left(N_F^p(h+V(\cdot))\right).\]
\begin{remark}
Conditions $(\Ka_1)$-$(\Ka_4)$ imply that $\left\{w\in N_F^p(x)\colon x=h+V(w)\right\}$ is a singleton for every $x\in S^p_{\!F}(h)$ $($compare Lemma 2. in \cite{pietkun}$)$.
\end{remark}
Theorems concerning the geometrical structure of the set ${\mathcal S}^p_{\!F}(h)$ are direct conclusions of the following technical lemma.
\begin{lemma}\label{contraction}
Let $E$ be a separable Banach space. Assume that $F\colon I\times E\map E$ satisfies $({\bf H})$ and $k\colon\bigtriangleup\to{\mathcal L}(E)$ satisfies $(\Ka_5)$-$(\Ka_6)$. There is an equivalent norm on the space $L^p(I,E)$, in which the multivalued map $G_p\colon C(I,E)\times L^p(I,E)\map L^p(I,E)$ given by the formula 
\[G_p(h,u):=\left\{w\in L^p(I,E)\colon w(t)\in F\left(t,h(t)+\int_0^tk(t,s)u(s)\,ds\right)\mbox{ a.e. in }I\right\}=N_F^p(h+V(u))\]
is continuous and contractive with respect to $u$.
\end{lemma}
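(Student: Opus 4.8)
The plan is to view $G_p$ as the Nemytskij operator $N_F^p$ pre-composed with the affine continuous map $u\mapsto h+V(u)$, and to turn the Lipschitz dependence of $N_F^p$ on its argument (furnished by $(\h_3)$) into a genuine contraction by re-weighting the norm of the target space in Bielecki's fashion. First I would recall that $(\Ka_5)$ together with H\"older's inequality makes $V$ a bounded operator into $C(I,E)$ (its continuity in $t$ being exactly $(\Ka_6)$), so that $x:=h+V(u)$ is a well-defined element of $C(I,E)$ and $t\mapsto F(t,x(t))$ is a measurable multimap with nonempty closed values --- here $(\h_2)$ gives measurability in $t$ and $(\h_3)$ continuity in $x$, whence the superposition is measurable. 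The Kuratowski--Ryll-Nardzewski theorem, combined with the estimate $d(0,F(t,x(t)))\<\beta(t)+\alpha(t)|x(t)|$ read off from $(\h_3)$--$(\h_4)$, shows that $G_p(h,u)=N_F^p(x)$ is a nonempty closed subset of $L^p(I,E)$; this settles the well-posedness of the values.

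On $L^p(I,E)$ I introduce the equivalent norm $||u||_\lambda:=\left(\int_0^Te^{-\lambda\Phi(t)}|u(t)|^p\,dt\right)^{1/p}$, where $\Phi(t):=\int_0^t(1+\alpha(\tau))\,d\tau$ is continuous and strictly increasing and $\lambda>0$ is a parameter to be fixed later; since $e^{-\lambda\Phi}$ lies between two positive constants on the compact interval $I$, $||\cdot||_\lambda$ is equivalent to $||\cdot||_p$ for each $\lambda$. Fixing $h$ and $u_1,u_2$, set $x_i:=h+V(u_i)$ and $v:=u_1-u_2$; then pointwise $|x_1(t)-x_2(t)|\<\int_0^t||k(t,s)||_{\mathcal L}|v(s)|\,ds$. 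Given $w_1\in G_p(h,u_1)$, the bound $d(w_1(t),F(t,x_2(t)))\<d_H(F(t,x_1(t)),F(t,x_2(t)))\<\alpha(t)|x_1(t)-x_2(t)|$ from $(\h_3)$ allows me to invoke a Filippov-type measurable selection theorem and produce $w_2\in G_p(h,u_2)$ with $|w_1(t)-w_2(t)|\<\alpha(t)|x_1(t)-x_2(t)|+\delta(t)$ a.e., for an arbitrarily small $\delta\in L^p(I,\R{})$ that absorbs the possible failure of proximinality of $F(t,x_2(t))$ in a general Banach space.

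Substituting the Volterra bound on $|x_1-x_2|$ and applying the weighted H\"older inequality with exponents $q,p$ to split off the factor $e^{-\lambda\Phi(s)/p}|v(s)|$, I reduce $||w_1-w_2||_\lambda^p$ to $||v||_\lambda^p$ times the quantity $\int_0^T\alpha(t)^p\left(\int_0^t||k(t,s)||_{\mathcal L}^qe^{-\lambda q(\Phi(t)-\Phi(s))/p}\,ds\right)^{p/q}dt$ (plus a remainder controlled by $||\delta||_\lambda$). The crux of the proof, and the step I expect to be hardest, is to force this last integral strictly below $1$. For $1<p<\infty$ the inner integrand is dominated by $||k(t,s)||_{\mathcal L}^q$ and, since $\Phi(t)-\Phi(s)>0$ for $s<t$, tends to $0$ pointwise as $\lambda\to\infty$; the inner integral is in turn majorised by $\kappa(t)^q:=\int_0^t||k(t,s)||_{\mathcal L}^q\,ds$, and the outer integrand by $\kappa_\infty^p\,\alpha(t)^p\in L^1(I,\R{})$, where $\kappa_\infty=\sup_t\kappa(t)<\infty$ by $(\Ka_5)$--$(\Ka_6)$, so dominated convergence drives the whole integral to $0$. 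For $p=1$ one instead applies Fubini and uses $\int_s^Te^{-\lambda(\Phi(t)-\Phi(s))}\,d\Phi(t)\<\lambda^{-1}$ to obtain the uniform-in-$s$ bound $M\lambda^{-1}$, with $M=\sup_t||k(t,\cdot)||_\infty$. Choosing $\lambda$ so large that the integral is $<1$ and letting $\delta\to0$ yields a constant $L<1$ with $d_H(G_p(h,u_1),G_p(h,u_2))\<L||u_1-u_2||_\lambda$; the symmetry of the construction in $w_1,w_2$ gives the contraction in $u$.

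Continuity in $h$ follows from the very same selection argument applied to $x_i:=h_i+V(u)$: now $|x_1(t)-x_2(t)|=|h_1(t)-h_2(t)|\<||h_1-h_2||$, so that $d_H(G_p(h_1,u),G_p(h_2,u))\<C||h_1-h_2||$ with $C=\left(\int_0^Te^{-\lambda\Phi(t)}\alpha(t)^p\,dt\right)^{1/p}$. The triangle inequality for the Hausdorff distance then combines the two estimates into $d_H(G_p(h_1,u_1),G_p(h_2,u_2))\<C||h_1-h_2||+L||u_1-u_2||_\lambda$, which delivers the asserted joint continuity of $G_p$ on $C(I,E)\times L^p(I,E)$ together with contractivity in the second variable.
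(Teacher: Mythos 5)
Your proposal is correct and follows essentially the same route as the paper: a Bielecki-type exponential reweighting of $L^p(I,E)$ combined with a Bressan--Colombo/Filippov-type selection to estimate the Hausdorff distance via H\"older and Fubini. The differences are only in bookkeeping --- the paper weights by $\exp\left(-2^{2p-1}M\int_0^t\alpha(s)^p\,ds\right)$ and computes the explicit contraction constant $2^{-1/p}$ in a single joint estimate, whereas you weight by $\exp\left(-\lambda\int_0^t(1+\alpha(s))\,ds\right)$, obtain smallness of the contraction constant by dominated convergence as $\lambda\to\infty$, and combine separate $h$- and $u$-estimates by the triangle inequality.
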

\begin{proof}
Endow the space $L^p(I,E)$ of Bochner $p$-integrable functions with the following equivalent norm 
\begin{equation}\label{norm}
|||w|||_p:=\left(\int_0^Te^{-2^{2p-1}Mr(t)}|w(t)|^p\,dt\right)^{\frac{1}{p}},
\end{equation}
where $M:=\max\left\{1,\sup_{t\in I}||k(t,\cdot)||_q^p\right\}$ and $r(t):=\int_0^t\alpha(s)^p\,ds$.
\par It follows from $(\h_3)$ that $F(t,\cdot)$ is Hausdorff continuous. By virtue of \cite[Th.3.3]{papa} the map $F$ is $\Sigma\otimes{\mathcal B}(E)$-measurable. In particular $F(\cdot,h(\cdot)+V(u)(\cdot))$ is measurable for $(h,u)\in C(I,E)\times L^p(I,E)$, since $F$ is superpositionally measurable (\cite[Th.1.]{zyg}). Thanks to $(\h_3)$ and $(\h_4)$ we have $\inf\{|z|\colon z\in F(t,h(t)+V(u)(t))\}\in L^p(I,\R{})$, which means that $G_p(h,u)\neq\varnothing$.\par Take $(h_1,u_1), (h_2,u_2)\in C(I,E)\times L^p(I,E)$, $w_1\in G_p(h_1,u_1)$ and $\eps>0$. It is easy to see that $t\mapsto d(w_1(t),F(t,h_2(t)+V(u_2)(t)))$ is measurable. By $(\h_3)$ we have \[d(w_1(t),F(t,h_2(t)+V(u_2)(t)))\<\alpha(t)||h_1-h_2+V(u_1-u_2)||\] a.e. in $I$, and so $d(w_1(\cdot),F(\cdot,h_2(\cdot)+V(u_2)(\cdot)))\in L^p(I,\R{})$. Define $\psi=\essinf\{|u(\cdot)|\colon u\in K\}$, where $K:=\{w_1\}-N_F^1(h_2+V(u_2))$. Considering that \[\psi(t)\<d(w_1(t),F(t,h_2(t)+V(u_2)(t)))<d(w_1(t),F(t,h_2(t)+V(u_2)(t)))+\eps\] a.e. in $I$, it follows by \cite[Prop.2.]{brescol} that there is $w_2\in G_p(h_2,u_2)$ such that $|w_1(t)-w_2(t)|<d(w_1(t),F(t,h_2(t)+V(u_2)(t)))+\eps$ for a.a. $t\in I$. Thus, we can estimate
{\allowdisplaybreaks \begin{align*}
|||w_1-w_2|||_p^p&=\int_0^Te^{-2^{2p-1}Mr(t)}|w_1(t)-w_2(t)|^p\,dt\\&\<\int_0^Te^{-2^{2p-1}Mr(t)}\left(d(w_1(t),F(t,h_2(t)+V(u_2)(t)))+\eps\right)^p\,dt\\&\<\int_0^Te^{-2^{2p-1}Mr(t)}\left(d_H(F(t,h_1(t)+V(u_1)(t)),F(t,h_2+V(u_2)(t))+\eps\right)^p\,dt\\&\<\int_0^Te^{-2^{2p-1}Mr(t)}\left(\alpha(t)\left|h_1(t)-h_2(t)+\int_0^tk(t,s)(u_1(s)-u_2(s))\,ds\right|+\eps\right)^p\,dt\\&\<\int_0^Te^{-2^{2p-1}Mr(t)}2^{p-1}\left(\alpha(t)|h_1(t)-h_2(t)|+\eps\right)^p\,dt\\&+\int_0^Te^{-2^{2p-1}Mr(t)}2^{p-1}\alpha(t)^p\left(\int_0^t\!\!\!||k(t,s)||_{\mathcal L}|u_1(s)-u_2(s)|\,ds\right)^p\,dt
\\&\<||h_1-h_2||^p\int_0^T2^{2p-2}e^{-2^{2p-1}Mr(t)}\alpha(t)^p\,dt+2^{2p-2}\eps^p\int_0^Te^{-2^{2p-1}Mr(t)}\,dt\\&+\int_0^Te^{-2^{2p-1}Mr(t)}\alpha(t)^p2^{p-1}||k(t,\cdot)||_q^p\int_0^t|u_1(s)-u_2(s)|^p\,ds\,dt\\&\<\frac{1}{2M}\left(1-e^{-2^{2p-1}Mr(T)}\right)||h_1-h_2||^p+T2^{2p-2}\eps^p\\&+M\int_0^T\int_s^Te^{-2^{2p-1}Mr(t)}\alpha(t)^p2^{p-1}|u_1(s)-u_2(s)|^p\,dt\,ds\\&\<\frac{1}{2M}||h_1-h_2||^p+T2^{2p-2}\eps^p\\&+\frac{1}{2^p}\int_0^T\left(e^{-2^{2p-1}Mr(s)}-e^{-2^{2p-1}Mr(T)}\right)|u_1(s)-u_2(s)|^p\,ds\\&\<\frac{1}{2}||h_1-h_2||^p+\frac{1}{2^p}\int_0^Te^{-2^{2p-1}Mr(s)}|u_1(s)-u_2(s)|^p\,ds\\&-\frac{1}{2^p}e^{-2^{2p-1}Mr(T)}||u_1-u_2||_p^p+T2^{2p-2}\eps^p\\&\<\frac{1}{2}\left(||h_1-h_2||^p+|||u_1-u_2|||_p^p\right)+T2^{2p-2}\eps^p.
\end{align*}}
Since $\eps$ was arbitrarily small and $w_1$ was an arbitrary element of $G_p(h_1,u_1)$ it follows that \[\left(\sup_{w\in G_p(h_1,u_1)}d(w,G_p(h_2,u_2))\right)^p\<\frac{1}{2}\left(||h_1-h_2||^p+|||u_1-u_2|||_p^p\right)\] and consequently \[d_H(G_p(h_1,u_1),G_p(h_2,u_2))\<\left(\frac{1}{2}\left(||h_1-h_2||^p+|||u_1-u_2|||_p^p\right)\right)^{\frac{1}{p}}.\] In particular, \[d_H(G_p(h,u_1),G_p(h,u_2))\<\frac{1}{2^{p^{-1}}}|||u_1-u_2|||_p.\]
Therefore $G_p$ is Hausdorff continuous and contractive with respect to the second variable.
\end{proof}

\section{Main results}
The following result is the announced enhancement of Theorem 6. in \cite{pietkun}, dedicated to the description of geometric structure of the solution set $S^p_{\!F}(h)$.
\begin{theorem}\label{geometry}
Let $p\in[1,\infty)$, while the space $E$ is reflexive for $p\in(1,\infty)$. Assume that $F\colon I\times E\map E$ satisfies $({\bf F})$ and $k\colon\bigtriangleup\to{\mathcal L}(E)$ satisfies $(\Ka_1)$-$(\Ka_4)$. Then the solution set $S^p_{\!F}(h)$ of the integral inclusion \eqref{inclusion} is an $R_\delta$-set in the space $C(I,E)$.
\end{theorem}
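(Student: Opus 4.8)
The plan is to realise $S^p_{\!F}(h)=\fix(\mathcal T)$, where $\mathcal T:=h+V\circ N_F^p\colon C(I,E)\map C(I,E)$, as the fixed-point set of a condensing field and to prove it is $R_\delta$ via the Aronszajn--Browder--Gupta scheme, feeding that scheme the strengthened convergence theorem in place of the auxiliary hypotheses of \cite{pietkun}. First I would record what is already available: the growth bound $(\F_4)$ together with $(\Ka_4)$ and a Gronwall estimate confine every solution to a fixed closed ball $\overline B(0,R)\subset C(I,E)$, so $\mathcal T$ is a self-map of that ball; condition $(\F_5)$ makes $V\circ N_F^p$ $\chi$-condensing, which simultaneously yields a fixed point and the compactness of $\fix(\mathcal T)$ (a Gronwall-type inequality for the Hausdorff measure of noncompactness, built from $(\F_5)$, forces it to vanish on the fixed-point set). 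Thus $S^p_{\!F}(h)$ is a nonempty compactum, exactly as in \cite{pietkun}; only its $R_\delta$-structure remains.

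The heart of the construction is a sequence of single-valued, locally Lipschitz Carath\'eodory approximations $f_n\colon I\times E\to E$ of $F$: measurable in $t$, locally Lipschitz in $x$, graph-shrinking onto $F$ in the sense $f_n(t,x)\in\overline{\co}F(t,\overline B(x,\eps_n))+\overline B(0,\eps_n)$ with $\eps_n\downarrow 0$, and sharing a common growth bound inherited from $(\F_4)$. The convex closed values $(\F_1)$ together with the upper hemicontinuity $(\F_3)$ (equivalently, upper semicontinuity of each support function $\sigma(e^*,F(t,\cdot))$) are precisely what license such a Cellina-type approximation, after a preliminary fattening in the variable $x$. Setting $\mathcal T_n(x):=h+V\bigl(f_n(\cdot,x(\cdot))\bigr)$, the weighted-norm estimate underlying Lemma \ref{contraction} shows that in an equivalent norm the operator $x\mapsto V\bigl(f_n(\cdot,x(\cdot))\bigr)$ has Lipschitz constant $<1$; hence $\mathcal T_n$ is condensing and, for every $n$ and every $y$ in a neighbourhood of $0$, the equation $x=\mathcal T_n(x)+y$ possesses a unique solution depending continuously on $y$.

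Unique solvability with continuous dependence is the hypothesis of the Aronszajn--Browder--Gupta theorem in its version for condensing fields (see the admissible-map machinery of \cite{gorn}): it exhibits $\fix(\mathcal T)$ as a decreasing intersection of contractible compacta. The only nonroutine point is the closure property that approximate fixed points converge to genuine solutions, and this is where Theorem \ref{convergence} enters. Given $x$ in the intersection, one selects $w_n$ with $w_n(t)\in F(t,B(x(t),\eps_n))+\overline B(0,\eps_n)$ a.e. and $x=h+V(w_n)+y_n$, $\|y_n\|\to 0$; the bound $(\F_4)$ dominates $(w_n)_\n$ by a fixed $L^p$-function, so $(w_n)_\n$ is relatively weakly compact --- by reflexivity of $E$ when $p\in(1,\infty)$ and, when $p=1$, by the Dunford--Pettis criterion via the uniform integrability furnished by that same dominating function. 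Passing to a subsequence $w_n\rightharpoonup w$ and applying Theorem \ref{convergence} with $G(t):=\{x(t)\}$, $G_n(t):=\overline B(x(t),\eps_n)$ and perturbation $F(t,\cdot)$, one gets $w(t)\in\overline{\co}F(t,x(t))=F(t,x(t))$ a.e., the last equality by $(\F_1)$. Finally, from $x=h+V(w_n)+y_n$ with $y_n\to 0$ one has $V(w_n)\to x-h$ in $C(I,E)$, while $w_n\rightharpoonup w$ gives $V(w_n)(t)\rightharpoonup V(w)(t)$ in $E$ for each $t$; uniqueness of weak limits yields $x-h=V(w)$, i.e. $x\in S^p_{\!F}(h)$. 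Hence $S^p_{\!F}(h)$ is the intersection of the contractible compacta and so is $R_\delta$.

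I expect the principal obstacle to be the construction of the family $f_n$: one must manufacture $t$-measurable, $x$-Lipschitz single-valued maps that are graph-close to a perturbation which is only upper hemicontinuous (not upper semicontinuous) and whose values are merely closed and convex (not compact), all while keeping the growth and noncompactness estimates uniform in $n$, so that every $\mathcal T_n$ remains condensing on $\overline B(0,R)$ and the uniqueness required by the Browder--Gupta step is preserved. A secondary delicate point is the $p=1$ instance of Theorem \ref{convergence}, where the requisite weak compactness must be extracted from uniform integrability rather than from reflexivity of $E$.
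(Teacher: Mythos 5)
Your skeleton---Browder--Gupta applied to a sequence of integral equations with measurable, locally Lipschitz right-hand sides $f_n$ graph-approximating $F$, with the convergence theorem closing the limit passage---is exactly the skeleton of the paper (inherited from \cite[Th.~6]{pietkun}). But the step you defer as ``the principal obstacle'' is precisely the step the paper actually proves, and it is the entire content of the improvement over \cite{pietkun}; leaving it open leaves the proof with a genuine gap. Concretely: to build a locally Lipschitz $f_n$ one needs, at each node of a locally finite covering, a strongly measurable selection of the approximating multimap, and for the scheme to produce \emph{compact, decreasing} approximate solution sets one needs the approximations to take values in a fixed weakly/strongly compact tube. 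The paper achieves both by first replacing $F$ with $\w{F}(t,x)=\overline{\co}F(t,Pr(t,x))$, where $Pr(t,\cdot)$ is the metric projection onto the compact invariant set $X(t)$ (so that $F_n(t,x)\subset\overline{\co}F(t,X(t))$), and then proving that $\w{F}(\cdot,x)$ admits a strongly measurable selection. That last point is where Theorem~\ref{convergence} is really needed in its \emph{multivalued} form: one approximates $Pr(\cdot,x)$ by step multifunctions $Pr_n$, selects $w_n(t)\in F(t,Pr_n(t))$ using $(\F_2)$, extracts a weak limit, and applies Theorem~\ref{convergence} with $G(t)=Pr(t,x)$ a genuinely compact-valued map. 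In your proposal Theorem~\ref{convergence} is invoked only with the singleton $G(t)=\{x(t)\}$, which is the classical convergence theorem already used in \cite{pietkun} and does not, by itself, remove the auxiliary hypotheses the theorem is meant to eliminate.

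Two secondary points would also need repair. First, you cannot import the contraction estimate of Lemma~\ref{contraction}: it requires the global Lipschitz condition $(\h_3)$ with an $L^p$ modulus, which is not available under $({\bf F})$; the $f_n$ are only \emph{locally} Lipschitz, so unique solvability of $x=h+V(f_n(\cdot,x(\cdot)))+y$ must come from local uniqueness plus continuation (as in \cite{pietkun}), not from a single global contraction in a weighted norm. Second, for $p=1$ the space $E$ is not assumed reflexive, and uniform integrability alone does not yield relative weak compactness in $L^1(I,E)$; one also needs pointwise relative weak compactness of $\{w_n(t)\}_\n$, which the paper obtains from the containment $\{w_n(t)\}_\n\subset F\bigl(t,\bigcup_{t\in I}X(t)\bigr)$ together with \cite[Prop.~11]{ulger}---again a consequence of the compactifying projection you have omitted.
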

\begin{proof}
We adopt the notation and arguments used to justify \cite[Th. 6.]{pietkun} in the context of the assumption $(\E_2)$. Let us recall that there exists a nonempty convex compact set $X\subset C(I,E)$ possessing the following property:
\[h(t)+\int_0^tk(t,s)\,\overline{\co}F(s,X(s))\,ds\subset X(t)\;\;\mbox{on }I.\]
Let $Pr\colon I\times E\map E$ be the time-dependent metric projection, given by:
\[Pr(t,x)=\{y\in X(t)\colon |x-y|=\inf\{|x-z|\colon z\in X(t)\}\}.\]
Observe that $Pr$ is an upper semicontinuous multimap with compact values. Relying on the compactness of the set $X$ define the multimap $\w{F}\colon I\times E\map E$ by the formula:
\[\w{F}(t,x)=\overline{\co}F(t,Pr(t,x)).\]
Note that $\w{F}$ satisfies assumptions $(\F_1)$-$(\F_5)$. Replacement of the map $F$ by the map $\w{F}$ does not change the set of solutions to inclusion \eqref{inclusion}, i.e. $S_{\!F}^p=S^p_{\!\w{F}}$. Using a strictly analogous approach to that applied in the proof of case $(\E_1)$ we can represent the solution set $S_{\!\w{F}}^p$ in the form of a countable intersection of a deacreasing sequence of compact solution sets $S^p_n$ to integral inclusions corresponding to multivalued approximations $F_n$ such that $F_n(t,x)\subset\overline{\co}\,\w{F}(t,B(x,3^{-n+1}))\subset\overline{\co}F(t,X(t))$ for $(t,x)\in I\times E$. Resting on the uniqueness of solutions to integral equations of the form \[x(t)=g(t)+\int_{\tau}^tk(t,s)f_n(s,x(s))\,ds,\] one can show that sets $S_n^p$ are contractible. As a result, we conclude that $S_{\!\w{F}}^p$ is an $R_\delta$-set.\par The choice of measurable-locally Lipschitzian mapping $f_n\colon I\times E\to E$ is based on the existence of strongly measurable selection of the map $\w{F}(\cdot,x)$. Therefore the proof will be completed if we can show that the multimap $\overline{\co}F(\cdot,Pr(\cdot,x))$ possesses a strongly measurable selection.\par Since $Pr(\cdot,x)\colon I\map E$ is a compact valued upper semicontinuous multimap, there exists a sequence $(Pr_n\colon I\map E)_\n$ of compact valued upper semicontinuous step multifunctions such that for every $t\in I$ we have
\[Pr(t,x)\subset Pr_{n+1}(t)\subset Pr_n(t,x)\;\;\;\mbox{and}\;\;\;d_H(Pr_n(t),Pr(t,x))\underset{n\to\infty}{\longrightarrow} 0.\]
Indeed, one can define the map $Pr_n$ in the following way \[Pr_n(t):=\sum_{i=0}^{2^n-1}{\bf 1}_{[t_i^n,t_{i+1}^n]}(t)Pr\left([t_i^n,t_{i+1}^n],x\right),\]
where $t_i^n=i\frac{T}{2^n}$. In view of assumptions $(\F_2)$ and $(\F_4)$ there are functions $w_i^n\in L^p(I,E)$ such that $w_i^n(t)\in F(t,a_i^n)$ a.e. on $I$ for some $a_i^n\in Pr\left([t_i^n,t_{i+1}^n],x\right)$. Let $w_n=\sum_{i=0}^{2^n-1}w_i^n{\bf 1}_{[t_i^n,t_{i+1}^n]}$. The function $w_n$ is also Bochner integrable and more importantly $w_n(t)\in F(t,Pr_n(t))$ for almost all $t\in I$. If $p=1$, we see that for almost all $t\in I$ the set $\{w_n(t)\}_\n$ is contained in the weakly compact set $F\left(t,\bigcup_{t\in I}X(t)\right)$. Thus the sequence $(w_n)_\n$ is relatively weakly compact in $L^1(I,E)$, thanks to \cite[Proposition 11]{ulger}. The Eberlein-\v Smulian theorem implies the relative weak compactness of the sequence $(w_n)_\n$ for $p\in(1,\infty)$. Denote by $w$ the weak limit of some subsequence of $(w_n)_\n$. Applying Theorem \ref{convergence}. we infer that $w(t)\in \overline{\co}F(t,Pr(t,x))$ almost everywhere on $I$. Thus $w$ is the sought strongly measurable selection of $\w{F}(\cdot,x)$, completing the proof.
\end{proof}
\begin{corollary}\label{wniosek3}
Let $E$ be a real Banach space and let $p=1$. Assume that the multimap $F\colon I\times E\map E$ satisfies $({\bf F})$ and the linear operator $A\colon E\to E$ is the infinitesimal generator of a uniformly continuous semigroup of bounded linear operators. Then the set of all mild solutions of the Cauchy problem for the following semilinear differential inclusion 
\begin{equation}\label{234}
\begin{cases}
\dot{x}(t)+Ax(t)\in F(t,x(t))&\mbox{on }I,\\
x(0)=x_0,&
\end{cases}
\end{equation}
is an $R_\delta$-subset of the space $C(I,E)$.
\end{corollary}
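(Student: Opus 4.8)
The plan is to recognize the set of mild solutions of \eqref{234} as a solution set $S^1_{\!F}(h)$ of a Volterra integral inclusion \eqref{inclusion} with a suitably chosen inhomogeneity $h$ and kernel $k$, and then to invoke Theorem \ref{geometry}. The decisive remark is that the infinitesimal generator of a \emph{uniformly} continuous semigroup is necessarily a bounded operator, so $A\in{\mathcal L}(E)$ and the associated semigroup is the norm-convergent exponential $e^{-tA}=\sum_{n\geqslant 0}\frac{(-tA)^n}{n!}$. This trivialises every semigroup estimate that follows.

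First I would record that $x\in C(I,E)$ is a mild solution of \eqref{234} exactly when
\[x(t)=e^{-tA}x_0+\int_0^te^{-(t-s)A}w(s)\,ds,\quad t\in I,\]
for some $w\in L^1(I,E)$ with $w(t)\in F(t,x(t))$ a.e. on $I$. Putting $h(t):=e^{-tA}x_0$ and $k(t,s):=e^{-(t-s)A}$, this is precisely inclusion \eqref{inclusion}, so the mild-solution set equals $S^1_{\!F}(h)$; moreover $h\in C(I,E)$ because $t\mapsto e^{-tA}$ is norm-continuous.

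The substance of the argument is the verification of $(\Ka_1)$--$(\Ka_4)$ for this kernel. Since $A$ is bounded, $\frac{\partial}{\partial t}k(t,s)=-Ae^{-(t-s)A}$ exists, which gives $(\Ka_1)$; the maps $s\mapsto e^{-(t-s)A}$ and $t\mapsto e^{-(t-s)A}$ are norm-continuous, giving $(\Ka_2)$; on the diagonal $k(t,t)=\mathrm{Id}_E$ is a constant invertible operator, giving $(\Ka_3)$; and from
\[||Ae^{-(t-s)A}||_{\mathcal L}\<||A||_{\mathcal L}\,e^{T||A||_{\mathcal L}}\]
one sees that the constant $\mu\equiv||A||_{\mathcal L}e^{T||A||_{\mathcal L}}\in L^\infty(I,\R{})=L^q(I,\R{})$ (here $q=\infty$ since $p=1$) majorises the derivative as required by $(\Ka_4)$.

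With $({\bf F})$ holding for $F$ and $(\Ka_1)$--$(\Ka_4)$ confirmed for $k$, Theorem \ref{geometry} applies in the case $p=1$ — the case carrying no reflexivity assumption on $E$ — and concludes that $S^1_{\!F}(h)$, that is, the set of all mild solutions of \eqref{234}, is an $R_\delta$-subset of $C(I,E)$. I expect no genuine obstacle: boundedness of $A$ collapses every kernel condition to an elementary exponential bound, and the only role of the standing hypothesis $p=1$ is to let Theorem \ref{geometry} accommodate an arbitrary, possibly non-reflexive, Banach space $E$.
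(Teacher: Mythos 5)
Your proposal is correct and follows essentially the same route as the paper: rewrite the mild-solution set as $S^1_{\!F}(h)$ with $h(t)=U(t)x_0$ and $k(t,s)=U(t-s)$, observe that uniform continuity of the semigroup (equivalently, boundedness of $A$) makes $(\Ka_1)$--$(\Ka_4)$ elementary, and apply Theorem \ref{geometry} in the $p=1$ case, which requires no reflexivity of $E$. The paper merely asserts the kernel conditions without the explicit exponential bounds you supply, so your write-up is a more detailed version of the same argument.
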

\begin{proof}
Denote by $\{U(t)\}_{t\geqslant 0}$ the semigroup generated by the operator $A$. Recall that a continuous function $x\colon I\to E$ is a mild solution to \eqref{234} if there is an integrable $w\in N_F^1(x)$ such that 
\[x(t)=U(t)x_0+\int_0^tU(t-s)w(s)\,ds\;\;\;\text{for }t\in I.\] Uniform continuity of the semigroup $\{U(t)\}_{t\geqslant 0}$ means that the Volterra kernel $k(t,s):=U(t-s)$ satisfies conditions $(\Ka_1)$-$(\Ka_4)$, completing the proof.
\end{proof}
\begin{remark}
It is worth noting at this point that in the particular case of the semilinear differential inclusion \eqref{234} there are well established results regarding the $R_\delta$-structure of the solution set (\cite[Th.5]{bothe},\cite[Cor.5.3.1.]{obu}) and upper semicontinuous dependence on initial conditions (\cite[Cor.5.2.2.]{obu}), which are actually more general than Corollary \ref{wniosek3}., since they are based on the assumption that the operator A generates merely a $C_0$-semigroup.
\end{remark}
In the subsequent propositions we will discuss possible further implications of the structure result proven above. Following the established terminology of \cite{pazy}, let us recall that we are dealing with the parabolic initial value problem for an evolution inclusion 
\begin{equation}\label{evolution}
\begin{cases}
\dot{x}(t)+A(t)x(t)\in F(t,x(t))&\mbox{on }I,\\
x(0)=x_0,&
\end{cases}
\end{equation}
if the mentioned below assumptions are met:
\begin{itemize}
\item[$(P_1)$] the domain $D(A(t))=D$ of $A(t)$, $t\in I$ is dense in $E$ and independent of $t$, 
\item[$(P_2)$] for $t\in I$, the resolvent $R(\lambdaup:A(t))$ of $A(t)$ exists for all $\lambdaup$ with $\re\lambdaup\<0$ and there is a constant $M$ such that \[||R(\lambdaup:A(t))||_{\mathcal L}\<\frac{M}{|\lambdaup|+1}\;\;\;\mbox{for }\re\lambdaup\<0, t\in I,\]
\item[$(P_3)$] there exists constants $L$ and $0<\alpha\<1$ such that \[||(A(t)-A(s))A(\tau)^{-1}||_{\mathcal L}\<L|t-s|^\alpha\;\;\;\mbox{for }s,t,\tau\in I.\]
\end{itemize}
\begin{theorem}
Let $E$ be a real Banach space and let $(P_1)$-$(P_3)$ be satisfied. Moreover, assume that
\begin{itemize}
\item[(i)] the domain $D(A(t))=E$ for all $t\in I$ and independent of $t$,
\item[(ii)] the transformation $I\ni t\mapsto A(t)^{-1}x\in E$ is continuous for every $x\in E$,
\item[(iii)] the unique evolution system $U(t,s)$ generated by the family $\{A(t)\}_{t\in I}$ is continuous with respect to the second variable in the uniform operator topology, i.e. $U(t,\cdot)\colon[0,t]\to{\mathcal L}(E)$ is continuous for every $t\in I$.  
\end{itemize}
If the multivalued perturbation $F\colon I\times E\map E$ satisfies conditions $({\bf F})$ with $p=1$, then the set $S_{\!F}(x_0)\subset C(I,E)$ of all mild solutions of the parabolic initial value problem \eqref{evolution} is an $R_\delta$-set.
\end{theorem}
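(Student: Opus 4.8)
The plan is to reduce the parabolic evolution inclusion \eqref{evolution} to the Volterra integral inclusion \eqref{inclusion} by identifying the evolution system $U(t,s)$ with an admissible Volterra kernel, and then invoke Corollary \ref{wniosek3}. or, more directly, Theorem \ref{geometry}. with $p=1$. Recall that under the parabolic assumptions $(P_1)$–$(P_3)$, the family $\{A(t)\}_{t\in I}$ generates a unique evolution system $U(t,s)$, $0\<s\<t\<T$, and a continuous function $x\colon I\to E$ is a mild solution of \eqref{evolution} precisely when there exists $w\in N_F^1(x)$ with
\[x(t)=U(t,0)x_0+\int_0^tU(t,s)w(s)\,ds,\;t\in I.\]
Thus, setting $h(t):=U(t,0)x_0$ and $k(t,s):=U(t,s)$, the solution set $S_{\!F}(x_0)$ of \eqref{evolution} coincides with $S_{\!F}^1(h)=\fix(h+V\circ N_F^1(\cdot))$ for the Volterra integral inclusion \eqref{inclusion}. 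Since $x_0\in E$ is fixed and $U(\cdot,0)x_0\in C(I,E)$ by the strong continuity of the evolution system, $h$ is a legitimate inhomogeneity in $C(I,E)$.

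The crux of the argument is therefore to verify that the kernel $k(t,s)=U(t,s)$ satisfies the hypotheses $(\Ka_1)$–$(\Ka_4)$, after which Theorem \ref{geometry}. applies verbatim (with $E$ an arbitrary real Banach space, as permitted for $p=1$) to yield that $S_{\!F}(x_0)$ is an $R_\delta$-set. First I would recall the standard regularity properties of evolution systems in the parabolic case, as established in Pazy's treatment \cite{pazy}: under $(P_1)$–$(P_3)$ the map $(t,s)\mapsto U(t,s)$ is jointly strongly continuous on $\bigtriangleup$, the partial derivative $\frac{\partial}{\partial t}U(t,s)=-A(t)U(t,s)$ exists for $0\<s<t$, and there is a constant $C$ with $\|A(t)U(t,s)\|_{\mathcal L}\<C(t-s)^{-1}$. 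The differentiability of $k(\cdot,s)=U(\cdot,s)$ on $[s,T]$ gives $(\Ka_1)$, while $(\Ka_2)$ follows from hypothesis (iii) of the statement, which is imposed precisely to guarantee norm-continuity of $U(t,\cdot)$. Assumption (i) (that $D(A(t))=E$, so that each $A(t)$ is bounded) together with (ii) yields the continuity of $t\mapsto k(t,t)=U(t,t)=\mathrm{Id}$ and the invertibility of $k(t,t)$, securing $(\Ka_3)$ trivially since $U(t,t)$ is the identity.

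The main obstacle will be establishing $(\Ka_4)$: producing a function $\mu\in L^q(I,\R{})$ (here $q=\infty$, since $p=1$) dominating $\left\|\frac{\partial}{\partial t}U(t,s)\right\|_{\mathcal L}=\|A(t)U(t,s)\|_{\mathcal L}$ uniformly in $s$. The naive parabolic estimate $\|A(t)U(t,s)\|_{\mathcal L}\<C(t-s)^{-1}$ is \emph{not} in $L^\infty$ as $t\downarrow s$, so the singularity on the diagonal must be absorbed. This is exactly where the boundedness hypothesis (i) intervenes: when each $A(t)\in{\mathcal L}(E)$ is a bounded operator with $\sup_{t\in I}\|A(t)\|_{\mathcal L}<\infty$ (which follows from (i), (ii), and the uniform boundedness principle applied to the continuous family $t\mapsto A(t)^{-1}$ via the closed graph / open mapping theorem), the evolution system is uniformly bounded, $\sup_{\bigtriangleup}\|U(t,s)\|_{\mathcal L}=:K<\infty$, and hence $\left\|\frac{\partial}{\partial t}U(t,s)\right\|_{\mathcal L}=\|A(t)U(t,s)\|_{\mathcal L}\<\bigl(\sup_{\tau}\|A(\tau)\|_{\mathcal L}\bigr)K=:\mu$, a constant and therefore a member of $L^\infty(I,\R{})$. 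I would spell out this uniform bound carefully, since it is the step that converts the abstract parabolic hypotheses into the concrete integrable-derivative condition $(\Ka_4)$ demanded by Theorem \ref{geometry}. Once $(\Ka_1)$–$(\Ka_4)$ are in hand, the conclusion is immediate: by Theorem \ref{geometry}. the set $S_{\!F}^1(h)=S_{\!F}(x_0)$ is an $R_\delta$-set in $C(I,E)$.
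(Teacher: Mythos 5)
Your reduction of \eqref{evolution} to \eqref{inclusion} and your identification of $(\Ka_4)$ as the crux match the paper exactly; the divergence is in how that crux is handled. The paper explicitly declines to verify $(\Ka_4)$: it observes that $(\Ka_4)$ enters the proof of Theorem \ref{geometry}. only through Lemma 2 of \cite{pietkun} (injectivity of the Volterra operator $V$), and proves that injectivity directly --- introducing the modified operator $\w{V}(w)(t)=\int_0^tU(t,s)A(s)^{-1}w(s)\,ds$, showing via the bound $||A(\xi)U(\xi,s)A(s)^{-1}||_{\mathcal L}\<C$ from \cite[Th.6.1.]{pazy} and a Gronwall argument that $\w{V}$ is injective, and then transferring injectivity to $V$ using the boundedness of the family $\{A(t)\}_{t\in I}$. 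Your alternative --- verifying $(\Ka_4)$ itself through $\left\Arrowvert\frac{\partial}{\partial t}U(t,s)\right\Arrowvert_{\mathcal L}=||A(t)U(t,s)||_{\mathcal L}\<\sup_{\tau\in I}||A(\tau)||_{\mathcal L}\cdot\sup_{\bigtriangleup}||U(t,s)||_{\mathcal L}$ --- is shorter, and both suprema are indeed finite under the hypotheses, so the strategy is viable and arguably more economical than the paper's detour.

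However, your justification of the key ingredient $\sup_{t\in I}||A(t)||_{\mathcal L}<\infty$ does not work as written. The uniform boundedness principle applied to the family $t\mapsto A(t)^{-1}$ (pointwise bounded by (ii)) yields only $\sup_{t}||A(t)^{-1}||_{\mathcal L}<\infty$, which says nothing about $\sup_{t}||A(t)||_{\mathcal L}$: a uniform upper bound on the inverses is not a uniform lower bound. What is needed is pointwise boundedness of $t\mapsto A(t)x$, and this is exactly where the H\"older condition $(P_3)$ must intervene: from $||id-A(t)A(t+h)^{-1}||_{\mathcal L}\<L|h|^\alpha$ one obtains $||A(t+h)||_{\mathcal L}\<(1-L|h|^\alpha)^{-1}||A(t)||_{\mathcal L}$ for small $h$, whence (together with (ii)) strong continuity of $A(\cdot)$ and then $R:=\sup_{t\in I}||A(t)||_{\mathcal L}<\infty$ by the uniform boundedness principle, or simply by local boundedness and compactness of $I$. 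This is precisely the computation the paper carries out, for its own purposes, in the middle of its proof; you need to import it in place of your parenthetical. With that repaired your argument closes: $\mu\equiv R\cdot\sup_{\bigtriangleup}||U(t,s)||_{\mathcal L}$ is a constant in $L^\infty(I,\R{})$, $(\Ka_4)$ holds, and Theorem \ref{geometry}. applies.
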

\begin{proof}
Remind that $x\in S_{\!F}(x_0)$ iff \[x(t)\in U(t,0)x_0+\int_0^t U(t,s)F(s,x(s))\,ds\;\;\mbox{ on }I.\] Observe that all the hypotheses of Theorem \ref{geometry}. are fulfilled, except the condition $(\Ka_4)$. In particular, the evolution system $U\colon\bigtriangleup\to{\mathcal L}(E)$ satisfies properties $(\Ka_1)$-$(\Ka_3)$. The property $(\Ka_1)$ results from \cite[Th.6.1.]{pazy} (item $(E_2)^+$). Condition $(\Ka_2)$ is equivalent to (iii) and $(\Ka_3)$ follows from the very definition of the evolution system.\par Lemma 2. in \cite{pietkun} is crucial for the proof of the Theorem 6. in \cite{pietkun} and, consequently, for an enhanced version given in Theorem \ref{geometry}. What is important, the condition $(\Ka_4)$ is used exclusively to justify this particular lemma. From this it follows that the present proof will be completed if we show that the Volterra integral operator $V\colon L^1(I,E)\to C(I,E)$, given by $V(w)(t):=\int_0^tU(t,s)w(s)\,ds$, is a monomorphism. \par Take $w\in L^1(I,E)$. By virtue of (ii) the mapping $[0,t]\ni s\mapsto A(s)^{-1}w(s)\in E$ is strongly measurable. Note that \[|U(t,s)A(s)^{-1}w(s)|\<||U(t,s)||_{\mathcal L}||A(s)^{-1}||_{\mathcal L}|w(s)|\<CM|w(s)|,\] in view of property $(P_2)$ and $(E_1)'$ in \cite[Th.6.1.]{pazy}. Therefore, the following modified integral operator $\w{V}\colon L^1(I,E)\to C(I,E)$, \[\w{V}(w)(t):=\int_0^tU(t,s)A(s)^{-1}w(s)\,ds,\] is properly defined. \par Let $\w{V}(w_1)=\w{V}(w_2)$ and $w=w_1-w_2$. Basically, we will reproduce the approach used in the proof of Lemma 2. in \cite{pietkun}. Since $\frac{d}{dt}\w{V}(w)(t)=0$, we have
\begin{equation}\label{zero}
\lim_{n\to\infty}\left(\int\limits_0^{t-\frac{1}{n}}\frac{U\left(t-\frac{1}{n},s\right)A(s)^{-1}-U(t,s)A(s)^{-1}}{-\frac{1}{n}}w(s)\,ds+n\int\limits_{t-\frac{1}{n}}^tU(t,s)A(s)^{-1}w(s)\,ds\right)=0
\end{equation}
for every $t\in I$. If \[f_n(s)=\frac{U\left(t-\frac{1}{n},s\right)A(s)^{-1}-U(t,s)A(s)^{-1}}{-\frac{1}{n}}w(s){\bf 1}_{\left[0,\,t-\frac{1}{n}\right]}(s),\] then $f_n\in L^1([0,t],E)$ and $f_n(s)\xrightarrow[n\to\infty]{}\frac{\partial}{\partial t}U(t,s)A(s)^{-1}w(s)$ for $s\in[0,t)$. Using the findings of \cite[Th.6.1]{pazy} we can estimate
\begin{align*}
|f_n(s)|&\<\!\left\Arrowvert\frac{U\left(t-\frac{1}{n},s\right)A(s)^{-1}-U(t,s)A(s)^{-1}}{-\frac{1}{n}}\right\Arrowvert_{\mathcal L}\!\!\!|w(s)|\<\!\!\!\sup_{\xi\in\left[t-\frac{1}{n},t\right]}\left\Arrowvert\frac{\partial}{\partial t}U\left(\xi,s\right)A(s)^{-1}\right\Arrowvert_{\mathcal L}\!\!\!|w(s)|\\&=\sup_{\xi\in\left[t-\frac{1}{n},t\right]}\left\Arrowvert A(\xi)U\left(\xi,s\right)A(s)^{-1}\right\Arrowvert_{\mathcal L}|w(s)|\<C|w(s)|
\end{align*} 
for all $\n$ and $s\in\left[0,t-\frac{1}{n}\right)$. Consequently, there is a convergence 
\[\int\limits_0^{t-\frac{1}{n}}\frac{U\left(t-\frac{1}{n},s\right)A(s)^{-1}-U(t,s)A(s)^{-1}}{-\frac{1}{n}}w(s)\,ds\xrightarrow[n\to\infty]{}\int_0^t\frac{\partial}{\partial t}U(t,s)A(s)^{-1}w(s)\,ds\] for every $t\in I$. On the other side, we can evaluate the second term of \eqref{zero} in the following way:
\begin{align*}
n\!\int\limits_{t-\frac{1}{n}}^t|U(t,s)A(s)^{-1}w(s)-U(t,t)A(t)^{-1}w(t)|\,ds&\<||U(t,\xi(n))-U(t,t)||_{\mathcal L}n\int\limits_{t-\frac{1}{n}}^t|A(s)^{-1}w(s)|\,ds\\&+||U(t,t)||_{\mathcal L}n\!\int\limits_{t-\frac{1}{n}}^t|A(s)^{-1}w(s)-A(t)^{-1}w(t)|\,ds
\end{align*}
for some $\xi(n)\in\left[t-\frac{1}{n},t\right]$. Using the characteristic of Lebesgue points and the continuity of $U(t,\cdot)$ we obtain
\[\lim_{n\to\infty}n\int\limits_{t-\frac{1}{n}}^tU(t,s)A(s)^{-1}w(s)\,ds=U(t,t)A(t)^{-1}w(t)\] for a.a. $t\in I$. Applying \eqref{zero} one gets
\begin{equation}\label{zero1}
A(t)^{-1}w(t)=-\int_0^t\frac{\partial}{\partial t}U(t,s)A(s)^{-1}w(s)\,ds
\end{equation}
for a.a. $t\in I$.\par Observe that $\{A(t)\}_{t\in I}\subset{\mathcal L}(E)$, in view of the assumptions $(P_2)$ and (i). We claim that the family $\{A(t)\}_{t\in I}$ is strongly continuous, i.e. the map $I\ni t\mapsto A(t)x\in E$ is continuous for every fixed $x\in E$. Considering $(P_3)$, we have
\[||id-(id-A(t)(A(t)^{-1}-A(t+h)^{-1}))||_{\mathcal L}=||id-A(t)A(t+h)^{-1}||_{\mathcal L}\<L|h|^\alpha\] and consequently \[||(id-A(t)(A(t)^{-1}-A(t+h)^{-1}))^{-1}||_{\mathcal L}\<\frac{1}{1-||id-A(t)A(t+h)^{-1}||_{\mathcal L}}\<\frac{1}{1-L|h|^\alpha}.\] Thus \[||A(t+h)||_{\mathcal L}=||(id-A(t)(A(t)^{-1}-A(t+h)^{-1}))^{-1}A(t)||_{\mathcal L}\<\frac{||A(t)||_{\mathcal L}}{1-L|h|^\alpha}\<2||A(t)||_{\mathcal L}\] for sufficiently small $h$. Fix $x\in E$. Thanks to the above estimation we get: 
\begin{align*}
|A(t+h)x-A(t)x|&=|A(t+h)(A(t)^{-1}-A(t+h)^{-1})A(t)x|\\&\<||A(t+h)||_{\mathcal L}|(A(t)^{-1}-A(t+h)^{-1})A(t)x|\\&\<2||A(t)||_{\mathcal L}|x-A(t+h)^{-1}(A(t)x)|.
\end{align*}
for sufficiently small $h$. From (ii) it follows that the last quantity tends to zero as $h\to 0$. The uniform boundedness principle and the strong continuity of the family $\{A(t)\}_{t\in I}$ implies $R:=\sup_{t\in I}||A(t)||_{\mathcal L}<\infty$.\par As a consequence of \eqref{zero1} and \cite[Th.6.1.]{pazy} we obtain the following estimation
\[|w(t)|\<||A(t)||_{\mathcal L}\int_0^t\left\Arrowvert\frac{\partial}{\partial t}U(t,s)A(s)^{-1}\right\Arrowvert_{\mathcal L}|w(s)|\,ds\<R\int_0^tC|w(s)|\,ds\] for a.a. $t\in I$. In view of the Gronwall inequality, it is clear that $|w(t)|=0$ for a.a. $t\in I$. Thus $w_1=w_2$ in $L^1(I,E)$ and the operator $\w{V}$ is injective.\par Let $V(w_1)=V(w_2)$. Taking into account previously demonstrated properties of $\{A(t)\}_{t\in I}$ we see that $A(\cdot)w_1,A(\cdot)w_2\in L^1(I,E)$. Now it suffices to note that $\w{V}(A(\cdot)w_1)=\w{V}(A(\cdot)w_2)$. Hence $A(t)w_1(t)=A(t)w_2(t)$ for almost all $t\in I$. Eventually, $w_1(t)=A(t)^{-1}A(t)w_1(t)=A(t)^{-1}A(t)w_2(t)=w_2(t)$ a.e. on $I$, completing the proof.
\end{proof}
\par Let us recall that a set-valued map $F\colon U\subset E\map E$ is strongly upper semicontinuous if, for every sequence $(x_n)_\n$ in $U$ such that $x_n\rightharpoonup x_0$ and any sequence $(y_n)_\n$ satisfying $y_n\in F(x_n)$ for all $\n$, there is a subsequence $(y_{k_n})_\n$ such that $y_{k_n}\to y_0\in F(x_0)$. Providing another example of the application of Theorem \ref{geometry}., we will refer to the following fixed point result:
\begin{lemma}\label{fixed}
Let $E$ be a reflexive Banach space and let $\varphi\colon\overline{B}(0,R)\subset E\map\overline{B}\left(0,\frac{R}{2}\right)$ be a decomposable strongly upper semicontinuous map. Assume there is $S\in{\mathcal L}(E)$ with $||S||\<\frac{1}{2}$. Then the multimap $\Phi:=S+\varphi\colon\overline{B}(0,R)\map E$ has a fixed point.
\end{lemma}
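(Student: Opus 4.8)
The plan is to absorb the linear summand $S$ into an invertible operator so as to reduce the problem to the existence of a fixed point of a genuinely \emph{compact} admissible self-map of $\overline{B}(0,R)$, to which the Lefschetz theory for admissible maps applies. Since $||S||\<\frac{1}{2}<1$, the operator $\mathrm{id}-S$ is invertible with $||(\mathrm{id}-S)^{-1}||\<(1-||S||)^{-1}\<2$, by the Neumann series. Now $x\in\Phi(x)=Sx+\varphi(x)$ holds if and only if $(\mathrm{id}-S)x\in\varphi(x)$, that is, if and only if $x\in(\mathrm{id}-S)^{-1}\varphi(x)$. Hence the fixed point set of $\Phi$ coincides with that of the multimap $\Psi:=(\mathrm{id}-S)^{-1}\circ\varphi\colon\overline{B}(0,R)\map E$. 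Moreover, for $v\in\varphi(x)\subset\overline{B}(0,\frac{R}{2})$ we have $|(\mathrm{id}-S)^{-1}v|\<2\cdot\frac{R}{2}=R$, so that $\Psi$ actually maps $\overline{B}(0,R)$ into itself. Writing the decomposition $\varphi=f\circ F$ with $F\colon\overline{B}(0,R)\map M$ a $J$-map into a metric ANR $M$ and $f\colon M\to E$ continuous, we obtain $\Psi=\big((\mathrm{id}-S)^{-1}\circ f\big)\circ F$, again a composition of a $J$-map with a single-valued continuous map; thus $\Psi$ is decomposable, hence admissible, upper semicontinuous and compact valued.

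The crux of the argument is to verify that $\Psi$ is a compact map, i.e. that $\overline{\Psi(\overline{B}(0,R))}$ is norm-compact; this is precisely where reflexivity of $E$ and the strong upper semicontinuity of $\varphi$ are used. Given any sequence $(z_n)_\n$ in $\Psi(\overline{B}(0,R))$, write $z_n=(\mathrm{id}-S)^{-1}w_n$ with $w_n\in\varphi(x_n)$, $x_n\in\overline{B}(0,R)$. Since $E$ is reflexive, the ball $\overline{B}(0,R)$ is weakly compact, so by the Eberlein--\v Smulian theorem some subsequence satisfies $x_{n_k}\rightharpoonup x_0\in\overline{B}(0,R)$. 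Strong upper semicontinuity of $\varphi$ then furnishes a further subsequence along which $w_{n_{k_j}}\to w_0\in\varphi(x_0)$ in norm, and the continuity of $(\mathrm{id}-S)^{-1}$ yields $z_{n_{k_j}}\to(\mathrm{id}-S)^{-1}w_0$ in norm. Hence every sequence in the range of $\Psi$ admits a norm-convergent subsequence, so $\Psi$ is compact.

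It remains to apply the Lefschetz fixed point theorem for compact admissible maps (\cite{gorn}). As a closed convex subset of a Banach space, $\overline{B}(0,R)$ is an absolute retract, hence a connected acyclic ANR; consequently the endomorphism induced by $\Psi$ on \v Cech homology is the identity on $H_0$ and vanishes in positive degrees, so its generalized Lefschetz number equals $1\neq 0$. The compactness established above makes $\Psi$ a Lefschetz map on an ANR, whence $\fix\Psi\neq\varnothing$; every such fixed point is a fixed point of $\Phi$. The main obstacle is exactly the compactness step: the term $Sx$ prevents $\Phi$ itself from being compact, and it is only after rewriting the inclusion through $(\mathrm{id}-S)^{-1}$ that the interplay between weak compactness of the ball and the strong-to-norm behaviour encoded in strong upper semicontinuity produces a relatively norm-compact range.
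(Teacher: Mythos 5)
Your proof is correct, but it follows a genuinely different route from the paper. The paper keeps the splitting $\Phi=S+\varphi$ and invokes a Krasnosel'ski\u{\i}-type theorem of Bader for sums of a nonexpansive ($J$-)map and a strongly upper semicontinuous decomposable map, the only work being the verification of Yamamuro's boundary condition with $z=0$ on $\partial B(0,R)$ (together with the observation that the Opial condition in Bader's result is not needed when $S$ is a single-valued bounded linear operator). You instead exploit the fact that $S$ is linear with $\|S\|\<\tfrac12<1$ to invert $\mathrm{id}-S$ by the Neumann series and rewrite the fixed point problem as $x\in(\mathrm{id}-S)^{-1}\varphi(x)$; the same numerology ($\|(\mathrm{id}-S)^{-1}\|\<2$ against $\varphi$ taking values in $\overline{B}(0,\tfrac{R}{2})$) that the paper uses for Yamamuro's condition now makes $\Psi=(\mathrm{id}-S)^{-1}\circ\varphi$ a self-map of the ball, so no boundary condition is needed at all. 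Your compactness argument (weak sequential compactness of the ball in the reflexive space $E$ plus strong upper semicontinuity of $\varphi$ forcing norm-convergent subsequences in the range) is sound, and since $\Psi$ inherits the decomposition $\bigl((\mathrm{id}-S)^{-1}\circ f\bigr)\circ F$ it is admissible, so the Lefschetz theorem for compact admissible self-maps of an AR applies with Lefschetz number $1$. What each approach buys: the paper's argument would survive if $S$ were replaced by a genuinely multivalued nonexpansive $J$-map (the setting Bader's theorem is built for), whereas your reduction hinges on invertibility of $\mathrm{id}-S$ and hence on $S$ being single-valued and linear; in exchange, your argument is more self-contained, avoiding both the boundary condition and the measure-of-noncompactness machinery behind Bader's result. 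One cosmetic caveat: for admissible maps the induced map in \v Cech homology is really a family of homomorphisms indexed by selected pairs, so you should say that every selected pair has Lefschetz number $1$, rather than speaking of ``the'' induced endomorphism; this does not affect the conclusion.
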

\begin{proof}
Observe that $S$ is a nonexpansive $J$-map and $\Phi$ satisfies Yamamuro's condition, i.e. \[\exists\;z\in B(0,R)\;\;\forall\;x\in\partial B(0,R)\;\;\;\lambdaup(x-z)\in\Phi(x)-\{z\}\Rightarrow\lambdaup\<1.\] Indeed, set $z:=0$ and note that for $x\in\partial B(0,R)$ we have \[|\lambdaup|=\frac{|\lambdaup x|}{|x|}\<\frac{|\lambdaup x-Sx|+|Sx|}{|x|}\<\left(\frac{R}{2}+\frac{R}{2}\right)\frac{1}{R}=1.\] Proceed along the proof of \cite[Cor.11.]{bader} to convince oneself that $\fix(\Phi)\neq\varnothing$.
\end{proof}
\begin{remark}
Of course, the condition of Opial used to justify \cite[Cor.11.]{bader} is superfluous in our case, when $S$ is a single-valued bounded endomorphism. 
\end{remark}
\begin{theorem}
Let $E$ be a reflexive Banach space and $p\in[1,\infty)$. Assume that conditions $({\bf F})$ and $(\Ka_1)$-$(\Ka_4)$ are satisfied. If $F(t,\cdot)$ is strongly upper semicontinuous for a.a. $t\in I$, then there exists a continuous function $h\colon I\to E$, for which the integral inclusion \eqref{inclusion} possesses a $T$-periodic solution.
\end{theorem}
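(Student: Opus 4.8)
The plan is to recast the existence of a $T$-periodic solution as a fixed point problem in $E$ and to resolve it through Lemma \ref{fixed}. Fix once and for all a continuous $\theta\colon I\to{\mathcal L}(E)$ with $\theta(0)=\mathrm{id}$ and, for $\xi\in E$, set $h_\xi:=\theta(\cdot)\xi\in C(I,E)$, so that every $x\in S^p_{\!F}(h_\xi)$ obeys $x(0)=h_\xi(0)=\xi$. Define the return map $\Phi\colon E\map E$ by $\Phi(\xi):=\{x(T)\colon x\in S^p_{\!F}(h_\xi)\}$; a point with $\xi\in\Phi(\xi)$ yields $x\in S^p_{\!F}(h_\xi)$ satisfying $x(T)=\xi=x(0)$, which is the sought $T$-periodic solution of \eqref{inclusion} with $h:=h_\xi$. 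Putting $S:=\theta(T)\in{\mathcal L}(E)$ for the linear part, one has $\Phi=S+\varphi$ with $\varphi(\xi)=\{\int_0^Tk(T,s)w(s)\,ds\colon x\in S^p_{\!F}(h_\xi),\,w\in N_F^p(x),\,x=h_\xi+V(w)\}$, since $h_\xi(T)=S\xi$.

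First I would establish the structural properties of $\varphi$ demanded by Lemma \ref{fixed}. By Theorem \ref{geometry} every value $S^p_{\!F}(h_\xi)$ is an $R_\delta$-set, and the multimap $\xi\mapsto S^p_{\!F}(h_\xi)$ is upper semicontinuous with compact values -- compactness of the values and of the graph following from $(\F_5)$ and Theorem \ref{convergence} -- hence a $J$-map into the absolute retract $C(I,E)$. As $\xi=x(0)$ whenever $x\in S^p_{\!F}(h_\xi)$, the map $f\colon C(I,E)\to E$, $f(x):=x(T)-\theta(T)x(0)$, is continuous and linear and fulfils $\varphi(\xi)=\{f(x)\colon x\in S^p_{\!F}(h_\xi)\}$. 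Thus $\varphi$ is a composition of a $J$-map with a single-valued continuous map, i.e. decomposable.

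The core of the argument, and the step I expect to be the principal obstacle, is to show that $\varphi$ is \emph{strongly} upper semicontinuous; this is precisely where the reflexivity of $E$ and the strong upper semicontinuity of $F(t,\cdot)$ are exploited. I would take $\xi_n\rightharpoonup\xi_0$ and $y_n\in\varphi(\xi_n)$, say $y_n=\int_0^Tk(T,s)w_n(s)\,ds$ with $w_n\in N_F^p(x_n)$ and $x_n\in S^p_{\!F}(h_{\xi_n})$. A priori bounds from $(\F_4)$ and the Gronwall inequality keep $(x_n)_\n$ bounded, after which condition $(\F_5)$ yields its relative compactness in $C(I,E)$, so that along a subsequence $x_n\to x_0$. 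Growth $(\F_4)$ together with reflexivity (respectively \cite[Proposition 11]{ulger} and the Eberlein-\v Smulian theorem when $p=1$) provides a weakly convergent subsequence $w_n\rightharpoonup w_0$ in $L^p(I,E)$, and Theorem \ref{convergence} then identifies $w_0\in N_F^p(x_0)$ with $x_0\in S^p_{\!F}(h_{\xi_0})$. Finally, since $x_n\to x_0$ uniformly, the strong upper semicontinuity of $F(t,\cdot)$ forces the selections $w_n$ to converge strongly almost everywhere to $w_0$; the dominating function $c(\cdot)(1+\sup_n\|x_n\|)\in L^p(I,\R{})$ and $(\Ka_5)$ then upgrade the weak convergence $y_n\rightharpoonup y_0:=\int_0^Tk(T,s)w_0(s)\,ds$ to strong convergence $y_n\to y_0\in\varphi(\xi_0)$, which is exactly strong upper semicontinuity.

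It remains to secure the two quantitative hypotheses of Lemma \ref{fixed}. The linear part is at our disposal: choosing $\theta(T)=0$ (more generally $\|\theta(T)\|\<\frac12$) gives $\|S\|\<\frac12$. For the half-ball condition I would concentrate $\theta$ near $t=0$, i.e. let $\theta$ decay rapidly off a short interval $[0,\delta]$; then the $\xi$-dependent part of the a priori bound for $x$, and hence the homogeneous slope of $\int_0^Tk(T,s)w(s)\,ds$ in $|\xi|$, is governed by $\int_0^\delta\|k(T,s)\|_{\mathcal L}c(s)\,ds$ and can be made smaller than $\frac12$ by shrinking $\delta$, while the remaining constant term is absorbed by taking $R$ large. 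This places $\varphi(\overline{B}(0,R))\subset\overline{B}\left(0,\frac{R}{2}\right)$. With $\varphi$ decomposable, strongly upper semicontinuous and $\overline{B}\left(0,\frac{R}{2}\right)$-valued and with $\|S\|\<\frac12$, Lemma \ref{fixed} furnishes a fixed point $\xi\in\Phi(\xi)$, equivalently the desired $T$-periodic solution.
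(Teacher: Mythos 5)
Your overall skeleton is the paper's: a Poincar\'e-type return map split as a linear part $S=\theta(T)$ plus a multivalued part $\varphi$, with Theorem \ref{geometry} supplying the $R_\delta$-values needed for decomposability and Lemma \ref{fixed} closing the argument (the paper takes $h_\xi(t)=U(t)\xi$ with $U(0)=id$ and $||U(T)||_{\mathcal L}\<\frac12$, exactly your $\theta$). However, the verification of strong upper semicontinuity of $\varphi$ --- which you correctly identify as the crux --- contains two genuine errors. First, $(x_n)_\n$ is \emph{not} relatively compact in $C(I,E)$: since $x_n(0)=\xi_n$ and $\xi_n$ converges only weakly, the set $\{x_n(0)\}_\n$ need not be relatively compact, so no appeal to $(\F_5)$ can produce $x_n\to x_0$ uniformly. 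Only the integral part $V(w_n)$ converges strongly (in the paper this comes from the relative compactness of $F(t,\{z_n(t)\}_\n)$, itself a consequence of reflexivity plus strong upper semicontinuity of $F(t,\cdot)$, fed into Heinz's inequality \cite{heinz} and Arzel\`a--Ascoli); the full solution satisfies only $z_n(t)\rightharpoonup z_0(t)$. This also invalidates your appeal to Theorem \ref{convergence}, whose hypothesis (i) demands Hausdorff (i.e.\ strong) convergence of $G_n(t)=\{z_n(t)\}$; the paper must instead pass through weak sequential upper hemicontinuity and the convergence lemma of \cite{cichon}. Second, strong upper semicontinuity of $F(t,\cdot)$ does \emph{not} force $w_n(t)\to w_0(t)$ a.e.: for each fixed $t$ it yields only a $t$-dependent subsequence of $(w_n(t))_\n$ converging to \emph{some} element of $F(t,z_0(t))$, with no reason for that limit to be $w_0(t)$ and no single subsequence working for a.a.\ $t$. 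The paper never claims strong convergence of the selections; it keeps $w_n\rightharpoonup w_0$ and extracts the needed strong convergence at the level of $y_n=V(w_n)$ from the noncompactness estimate.

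A lesser but still real gap is the half-ball condition $\varphi(\overline{B}(0,R))\subset\overline{B}(0,\frac{R}{2})$. Your claim that concentrating $\theta$ on $[0,\delta]$ makes the homogeneous slope of $|\varphi(\xi)|$ in $|\xi|$ proportional to $\int_0^\delta||k(T,s)||_{\mathcal L}c(s)\,ds$ ignores the Gronwall amplification: since $|x(0)|=|\xi|$, the bound on $||x||$ carries a term of order $|\xi|$ multiplied by the Gronwall exponential, and this propagates into $\int_0^Tk(T,s)w(s)\,ds$ through $w(s)\in F(s,x(s))$ on all of $[0,T]$, not just on $[0,\delta]$. A careful Volterra--Gronwall computation might rescue a version of this, but as written it is unsubstantiated. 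The paper sidesteps the issue entirely by first reducing, w.l.o.g., to an integrably bounded $F$ (the truncation discussed in \cite{pietkun}), which makes $\varphi$ uniformly bounded by $R:=||k(T,\cdot)||_q||\mu||_p$ independently of $\xi$, so that $\varphi(\overline{B}(0,2R))\subset\overline{B}(0,R)$ is immediate. You should adopt that reduction and rework the compactness step along the lines above.
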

\begin{proof}
At the beginning note that we can assume, w.l.o.g., that the map $F$ is integrably bounded, i.e. there exists $\mu\in L^p(I,\R{})$ such that $||F(t,x)||^+\<\mu(t)$ for every $x\in E$ and for a.a. $t\in I$ (see commentary at the beginning of Section 3. in \cite{pietkun}).\par Let $\{U(t)\}_{t\in I}\subset{\mathcal L}(E)$ be a family of operators possessing the following properties: 
\begin{itemize}
\item[(i)] $U(0)=id$,
\item[(ii)] $\{U(t)\}_{t\in I}$ is strongly continuous, i.e. the map $I\ni t\mapsto U(t)x\in E$ is continuous for every $x\in E$,
\item[(iii)] $||U(T)||_{\mathcal L}\<\frac{1}{2}$.
\end{itemize}
Let us define the multimap $\,\varphi\colon E\map E$ in the following way: $\varphi(x):=V\left({\mathcal S}_F^p(U(\cdot)x)\right)(T)$. Clearly, \[|V(w)(T)|\<R:=||k(T,\cdot)||_q||\mu||_p\] for every $w\in{\mathcal S}_F^p(U(\cdot)x)$ and for all $x\in E$. Therefore $\varphi(\overline{B}(0,2R)\subset\overline{B}(0,R)$. We claim that the Poincar\'e-like operator $P\colon\overline{B}(0,2R)\map E$, given by $P(x):=U(T)x+\varphi(x)$, possesses a fixed point $x_0\in P(x_0)$. This fixed point is associated with a $T$-periodic solution to the problem \eqref{inclusion}, with the inhomogeneity $h\in C(I,E)$ given by $h(t)=U(t)x_0$. In view of Lemma \ref{fixed}. the present proof will be completed if we can only show that $\varphi$ is strongly usc decomposable map.\par Since the geometry of $R_\delta$-type sets is invariant under translation, the multi-valued map $E\ni x\mapsto S_{\!F}^p(U(\cdot)x)-U(\cdot)x\subset C(I,E)$ possesses values of $R_\delta$-type (by virtue of Theorem \ref{geometry}.). Note that $\varphi$ is a composition of this operator and the evaluation at time $T$.\par Take $(x_n)_\n$ in $\overline{B}(0,2R)$ such that $x_n\rightharpoonup x_0$. Let $\w{y}_n\in\varphi(x_n)$ for $\n$. Clearly, $\w{y}_n=y_n(T)$, where $y_n=V(w_n)$ for some $w_n\in N_F^p(z_n)$ such that $z_n=U(\cdot)x_n+V(w_n)$. The image $F(t,\{z_n(t)\}_\n)$ is relatively compact for a.a. $t\in I$ due to the following bound \[\sup_\n|z_n(t)|\<\sup_{t\in I}||U(t)||_{\mathcal L}\sup_\n|x_n|+\sup_{t\in I}||k(t,\cdot)||_q||\mu||_p\] and in view of the strong upper semicontinuity of $F(t,\cdot )$. Thus, on the one hand we obtain the relative weak compactness of $(w_n)_\n$ in $L^1(I,E)$ (through the use of \cite[Pro.11.]{ulger}). On the other, through the application of \cite[Cor.3.1]{heinz} (compare \cite[Th.3.]{pietkun}) in the following estimate \[\chi(\{y_n(t)\}_\n)\<\chi\left(\left\{\int_0^tk(t,s)w_n(s)\,ds\right\}_\n\right)\<2\int_0^t||k(t,s)||_{\mathcal L}\chi(F(s,\{z_n(s)\}_\n))\,ds\] we get the relative compactness of $\{y_n(t)\}_\n$ for every $t\in I$. It is easy to notice that the family $\{y_n\}_\n$ is equicontinuous. It follows from \[\sup_\n|y_n(t)-y_n(\tau)|\<||k(t,\cdot)-k(\tau,\cdot)||_q||\mu||_p+\sup_{t\in I}||k(t,\cdot)||_q\left(\int_{\tau}^t\mu(s)^p\,ds\right)^\frac{1}{p}.\] Therefore, passing to a subsequence if necessary we have the convergence $y_n\rightrightarrows y_0$. \par Denote by $w_0\in L^p(I,E)$ the weak limit of some subsequence of $(w_n)_\n$. Put $z_0(t):=U(t)x_0+V(w_0)(t)$ for $t\in I$. It suffices to show that $w_0\in N_F^p(z_0)$ to complete the proof. Indeed, for then $y_n=V(w_n)\rightrightarrows V(w_0)\in V({\mathcal S}_F^p(U(\cdot)x_0))$. This means that the operator $x\mapsto V\left({\mathcal S}_F^p(U(\cdot)x)\right)$ is a $J$-mapping and eventually $\varphi$ is decomposable. Moreover, setting $\w{y}_0:=V(w_0)(T)$ we obtain in the result that $\w{y}_n\to\w{y}_0\in\varphi(x_0)$.\par In order to demonstrate that $w_0(t)\in F(t,z_0(t))$ a.e. on $I$, it is enough to duplicate the routine argumentation involving Mazur's lemma. We omit the details of the proof and instead of this evoke some version of convergence theorem given in Lemma 1. in \cite{cichon}. It is obvious that for every sequence $(x_n)_\n$ in $E$ converging weakly to some $x_0$ holds the inequality: $\limsup_{n\to\infty}\sigma(e^*,F(t,x_n))\<\sigma(e^*,F(t,x_0))$ for every $e^*\in E^*$ and for a.a. $t\in I$. This indicates that the multimap $F(t,\cdot)$ is weakly sequentially upper hemicontinuous (within the meaning of Definition 1. in \cite{cichon}) a.e. on $I$. Now, observe that $z_n(t)=U(t)x_n+V(w_n)(t)\rightharpoonup z_0(t)$ for $t\in I$ (in fact, $z_n\rightharpoonup z_0$ in $C(I,E)$, but this knowledge is redundant). Therefore, all the hypotheses of \cite[Lem.1.]{cichon} are met (confront Remark 1. in \cite{pietkun}) and $w_0(t)\in F(t,z_0(t))$ a.e. on $I$.
\end{proof}
Below we formulate the thesis about the existence of a continuous selection of the solution set map $S_{\!F}^p$, which is part of a broader research trend initiated in \cite{cell, colo}.
\begin{theorem}
Let $p\in[1,\infty)$ and $E$ be a separable Banach space. Assume that $F\colon I\times E\map E$ satisfies $({\bf H})$ and $k\colon\bigtriangleup\to{\mathcal L}(E)$ satisfies $(\Ka_5)$-$(\Ka_6)$. Then the solution set map $S^p_{\!F}$ possesses a continuous selection.
\end{theorem}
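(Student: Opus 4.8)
The plan is to construct the continuous selection by a standard infinite-iteration scheme of Cellina type, exploiting the contraction property established in Lemma \ref{contraction}. The crucial observation is that under $({\bf H})$ and $(\Ka_5)$-$(\Ka_6)$, the map $G_p(h,u)=N_F^p(h+V(u))$ is Hausdorff continuous jointly in $(h,u)$ and contractive in $u$ with constant $2^{-1/p}<1$ in the equivalent norm $|||\cdot|||_p$. First I would recall that a selection of $S_{\!F}^p$ is induced by a selection of the selection set map ${\mathcal S}_{\!F}^p$: if $w\colon C(I,E)\to L^p(I,E)$ is continuous with $w(h)\in\fix(G_p(h,\cdot))$, then $h\mapsto h+V(w(h))$ is a continuous selection of $S_{\!F}^p$, since $V$ is a bounded linear operator. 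Thus it suffices to produce a continuous fixed-point selection for the parametrized contraction $u\mapsto G_p(h,u)$.

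To this end I would build a sequence of continuous approximate selections $(\sigma_n\colon C(I,E)\to L^p(I,E))_\n$ converging uniformly on bounded sets. Start with any continuous $\sigma_0$ (for instance the constant zero map, or the continuous selection of $G_p(h,0)$ guaranteed by Michael-type selection theorems for lower semicontinuous decomposable-valued maps, since $G_p$ has decomposable values and is lower semicontinuous). Having constructed $\sigma_n$, I would use the continuity and contractivity of $G_p$ together with the Bressan-Colombo-Fryszkowski selection theorem (\cite{brescol}) for lower semicontinuous maps with closed decomposable values in $L^p$ to choose a continuous $\sigma_{n+1}(h)\in G_p(h,\sigma_n(h))$ satisfying a controlled approximation
\begin{equation}\label{iterate}
|||\sigma_{n+1}(h)-\sigma_n(h)|||_p\<d_H\bigl(G_p(h,\sigma_n(h)),G_p(h,\sigma_{n-1}(h))\bigr)+2^{-n}.
\end{equation}
Iterating \eqref{iterate} and invoking the contraction estimate from Lemma \ref{contraction}, namely $d_H(G_p(h,\sigma_n(h)),G_p(h,\sigma_{n-1}(h)))\<2^{-1/p}|||\sigma_n(h)-\sigma_{n-1}(h)|||_p$, yields a geometric bound showing that $(\sigma_n(h))_\n$ is uniformly Cauchy in $L^p(I,E)$ on each bounded subset of $C(I,E)$.

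The limit $w(h):=\lim_{n\to\infty}\sigma_n(h)$ is then continuous, and since $\sigma_{n+1}(h)\in G_p(h,\sigma_n(h))$ with $G_p$ Hausdorff continuous and closed-valued, passing to the limit gives $w(h)\in G_p(h,w(h))$, i.e. $w(h)$ is a genuine fixed point. Finally I would set $s(h):=h+V(w(h))$ and verify $s(h)\in S_{\!F}^p(h)$ directly from the definitions. The main obstacle I anticipate is twofold: first, securing at each stage a \emph{continuous} (not merely measurable) selection $\sigma_{n+1}$ satisfying the pointwise approximation \eqref{iterate}, which requires the lower semicontinuity of $h\mapsto G_p(h,\sigma_n(h))$ as a map into the closed decomposable subsets of $L^p(I,E)$ and an appeal to a parametrized version of the Bressan-Colombo selection theorem; and second, controlling the approximation uniformly on bounded sets rather than merely pointwise, so that the limit inherits continuity. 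Both are handled by the joint Hausdorff continuity of $G_p$ furnished by Lemma \ref{contraction}, but the bookkeeping of the selection theorem's hypotheses (measurability of $F$, decomposability, and the role of separability of $E$) is where the argument must be carried out with care.
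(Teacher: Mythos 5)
Your overall strategy --- an iterative construction of continuous approximate selections via the Bressan--Colombo selection theorem for decomposable-valued maps, with convergence driven by Lipschitz/contraction estimates, and the final passage $h\mapsto h+V(w(h))$ --- is the same Cellina--Colombo-type scheme the paper follows. But the key inductive step, as you set it up, does not go through. You ask for a continuous $\sigma_{n+1}(h)\in G_p(h,\sigma_n(h))$ satisfying a \emph{norm}-level bound $|||\sigma_{n+1}(h)-\sigma_n(h)|||_p\<d_H(G_p(h,\sigma_n(h)),G_p(h,\sigma_{n-1}(h)))+2^{-n}$. The set of $w\in G_p(h,\sigma_n(h))$ obeying such a norm constraint is \emph{not} decomposable: if $u$ and $v$ both lie within distance $c$ of $\sigma_n(h)$ in $|||\cdot|||_p$, the splice $u{\bf 1}_A+v{\bf 1}_{I\setminus A}$ is only guaranteed to lie within distance $2^{1/p}c$. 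Since decomposability of the values is exactly the hypothesis that replaces convexity in the Fryszkowski/Bressan--Colombo selection theorem, you cannot invoke that theorem to produce a continuous selection of the norm-constrained submap. The paper avoids this by imposing the constraint \emph{pointwise a.e.\ in} $t$: it selects $f_{n+1}(\cdot,h)$ with $|f_{n+1}(t,h)-f_n(t,h)|<\alpha(t)(\beta_{n+1}(h)(t))^{1/p}$ a.e., a constraint which does define a decomposable set (splices preserve a.e.\ pointwise bounds), and only afterwards integrates to obtain the norm estimate; the summability then comes from the factorial decay $M^{n+1}||\alpha||_p^n/(n!)^{1/p}$ of the iterated kernels $\beta_n$, not from the renormed contraction constant $2^{-1/p}$ of Lemma \ref{contraction} (which the paper's proof of this theorem in fact never uses).

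A second, related gap: the selection machinery of \cite{brescol} (and the retraction theorem of \cite{bres}) is formulated for closed decomposable subsets of $L^1(I,E)$, not of $L^p$. For $p=1$ your plan can indeed be short-circuited by applying \cite[Th.1]{bres} to the contraction $G_1(h,\cdot)$ --- this is precisely how the paper proves Theorem \ref{ar}, which already yields a continuous selection of ${\mathcal S}^1_{\!F}$ and hence of $S^1_{\!F}$. But the statement you are proving covers all $p\in[1,\infty)$, and the paper explicitly remarks (after Theorem \ref{6}) that the availability of \cite{bres,brescol} in the $L^p$ setting is open. The paper therefore runs the whole induction inside $L^1$, where the a.e.\ pointwise bounds serve double duty: they make the constrained value sets decomposable \emph{and} they certify a posteriori that each selection actually belongs to $L^p$. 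Your proposal needs both repairs before the limit argument (which is otherwise fine) can be carried out.
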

\begin{proof}
Fix $\eps>0$, set $\eps_n:=\frac{n+1}{n+2}\eps$, $M:=\sup\limits_{t\in I}||k(t,\cdot)||_q$ and $m(t):=\int_0^t\alpha(s)^p\,ds$. Let $\gamma\colon C(I,E)\to L^1(I,\R{})$ be a continuous function, given by \[\gamma(h)(t):=2^p\max\left\{\beta(t)^p,\alpha(t)^p|ev(t,h)|\right\},\] where $ev(t,h)=h(t)$ is the evaluation. Define $\beta_n\colon C(I,E)\to L^1(I,\R{})$ in the following way 
\[\beta_n(h)(t)=M^{np}\left(\int_0^t\gamma(h)(s)\frac{(m(t)-m(s))^{n-1}}{(n-1)!}\,ds+T\eps_n\frac{m(t)^{n-1}}{(n-1)!}\right).\]
\par It follows from $(\h_3)$ that $F(t,\cdot)$ is Hausdorff continuous. Therefore, $F$ is $\Sigma\otimes{\mathcal B}(E)$-measurable in view of \cite[Th.3.3]{papa}. Theorem 1. in \cite{zyg} implies that the set-valued map $F(\cdot,ev(\cdot,h))$ is $\Sigma$-measurable. Thanks to $(\h_3)$ the multimap $F(t,ev(t,\cdot))$ is Hausdorff continuous. Hence, applying again Theorem 3.3. in \cite{papa} we see that $F(\cdot,ev(\cdot,\cdot))$ is $\Sigma\otimes{\mathcal B}(C(I,E))$-measurable. Observe that the multivalued map $F_0\colon C(I,E)\map L^1(I,E)$ such that \[F_0(h):=\left\{w\in L^1(I,E)\colon w(t)\in F(t,ev(t,h))\mbox{ a.e. in }I\right\}\] is lower semicontinuous and possesses nonempty closed decomposable values. This is consequence of Proposition 2.1. in \cite{colo}, since $d(0,F(t,ev(t,h)))\<\beta(t)+\alpha(t)|ev(t,h)|^{\frac{1}{p}}$ for a.a. $t\in I$. In fact, $F_0(h)\subset L^p(I,E)$, because $\inf\{|z|\colon z\in F(t,ev(t,h))\}\in L^p(I,\R{})$.\par Let $H_0\colon C(I,E)\map L^1(I,E)$ be a multimap described by the formula \[H_0(h):=\left\{w\in F_0(h)\colon |w(t)|<\left(\gamma(h)(t)+\eps_0\right)^{\frac{1}{p}}\mbox{ for a.a. }t\in I\right\}.\] We claim that it has nonempty values. Indeed, define the nonempty closed decomposable set $K:=N^1_F(ev(\cdot,h))$. Let $\psi\colon I\to\R{}_+$ be defined by $\psi=\essinf\{|w(\cdot)|\colon w\in K\}$. Using Castaing Representation we may write $F(t,ev(t,h))=\overline{\{g_k(t)\}}_\K$. Thus, for every $\K$, $\psi(t)\<|g_k(t)|$ a.e. in $I$. Obviously, there is a subset $J\subset I$ of full measure such that for every $t\in J$, $\psi(t)\<\inf_\K|g_k(t)|$. Consequently, 
\begin{align*}
\psi(t)\<d\left(0,\{g_k(t)\}_\K\right)&=d\left(0,\overline{\{g_k(t)\}}_\K\right)=d(0,F(t,ev(t,h)))\\&\<\left(2^p\max\{\beta(t)^p,\alpha(t)^p|ev(t,h)|\}\right)^{\frac{1}{p}}<\left(\gamma(h)(t)+\eps_0\right)^{\frac{1}{p}}
\end{align*}
a.e. in $I$. Applying Proposition 2. in \cite{brescol} we get $w\in L^p(I,E)$ such that $w(t)\in F(t,ev(t,h))$ and $|w(t)|<\left(\gamma(h)(t)+\eps_0\right)^{\frac{1}{p}}$ a.e. in $I$.  
\par Proposition 4. and Theorem 3. in \cite{brescol} imply the existence of continuous selection $g_0\colon C(I,E)\to L^p(I,E)$ of $H_0$. Define $f_0(t,h):=g_0(h)(t)$ and \[x_1(t,h):=h(t)+\int_0^tk(t,s)f_0(s,h)\,ds,\;\;t\in I.\] Observe that
\begin{align*}
|x_1(t,h)-ev(t,h)|^p&\<\left(\int_0^t||k(t,s)||_{\mathcal L}|f_0(s,h)|\,ds\right)^p\<||k(t,\cdot)||_q^p\int_0^t(\gamma(h)(s)+\eps_0)\,ds\\&<M^p\int_0^t(\gamma(h)(s)+\eps_1)\,ds=\beta_1(h)(t)
\end{align*}
for each $h\in C(I,E)$ and $t\in(0,T]$.\par We maintain that there exist Cauchy sequences $(f_n(\cdot,h))_\n\subset L^p(I,E)$ and $(x_n(\cdot,h))_\n\subset C(I,E)$, such that for all $\n$ the following properties hold:
\begin{itemize}
\item[(i)] $C(I,E)\ni h\mapsto f_n(\cdot,h)\in L^p(I,E)$ is continuous,
\item[(ii)] $f_n(t,h)\in F(t,x_n(t,h))$ for every $h\in C(I,E)$ and a.e. $t\in I$,
\item[(iii)] $|f_n(t,h)-f_{n-1}(t,h)|\<\alpha(t)(\beta_n(h)(t))^{\frac{1}{p}}$ for a.a. $t\in I$,
\item[(iv)] $x_{n+1}(t,h)=h(t)+\int_0^tk(t,s)f_n(s,h)\,ds$ for $t\in I$.
\end{itemize}
\par Suppose that $(f_k(\cdot,h))_{k=1}^n$ and $(x_k(\cdot,h))_{k=1}^n$ are constructed. Define $x_{n+1}(\cdot,h)\colon I\to E$ in accordance with (iv). By virtue of (i) it follows that \[x_n(\cdot,h_0)=h_0+V(f_{n-1}(\cdot,h_0))=\lim_{h\rightrightarrows h_0}h+V(f_{n-1}(\cdot,h))=\lim_{h\rightrightarrows h_0}x_n(\cdot,h),\]
i.e. the mappings $C(I,E)\ni h\mapsto x_n(\cdot,h)\in C(I,E)$ are also continuous. Following calculations contained in \cite{aubin} (formula (14), page 122) we see that
{\allowdisplaybreaks \begin{equation}\label{intbeta}
\begin{split}
&M^p\int_0^t\alpha(s)^p\beta_n(h)(s)\,ds\\&=M^{(n+1)p}\left(\int_0^t\int_0^s\alpha(s)^p\gamma(h)(\tau)\frac{(m(s)-m(\tau))^{n-1}}{(n-1)!}\,d\tau ds+\int_0^tT\eps_n\alpha(s)^p\frac{m(s)^{n-1}}{(n-1)!}\,ds\right)\\&=M^{(n+1)p}\left(\int_0^t\gamma(h)(\tau)\int_\tau^t\frac{1}{n!}\frac{d}{ds}(m(s)-m(\tau))^n\,ds d\tau+T\eps_n\int_0^t\frac{1}{n!}\frac{d}{ds}m(s)^n\,ds\right)\\&<M^{(n+1)p}\left(\int_0^t\gamma(h)(s)\frac{(m(t)-m(s))^n}{n!}\,ds+T\eps_{n+1}\frac{m(t)^n}{n!}\right)=\beta_{n+1}(h)(t).
\end{split}
\end{equation}}
Hence 
\begin{align*}
|x_{n+1}(t,h)-x_n(t,h)|^p&\<M^p\int_0^t|f_n(s,h)-f_{n-1}(s,h)|^p\,ds\<M^p\int_0^t\alpha(s)^p\beta_n(h)(s)\,ds\\&<\beta_{n+1}(h)(t)
\end{align*}
for $t\in(0,T]$. Let $F_{n+1}\colon C(I,E)\map L^1(I,E)$ be such that \[F_{n+1}(h):=\{w\in L^1(I,E)\colon w(t)\in F(t,x_{n+1}(t,h))\mbox{ a.e. in }I\}.\] Then $F_{n+1}$ is lower semicontinuous with nonempty closed decomposable values, by exactly the same arguments as the multimap $F_0$. Define $H_{n+1}\colon C(I,E)\map L^1(I,E)$ in the following way \[H_{n+1}(h):=\left\{w\in F_{n+1}(h)\colon |w(t)-f_n(t,h)|<\alpha(t)(\beta_{n+1}(h)(t))^{\frac{1}{p}}\mbox{ a.e. in }I\right\}.\] Notice that 
\begin{align*}
d(f_n(t,h),F(t,x_{n+1}(t,h)))&\<d(f_n(t,h),F(t,x_n(t,h)))+d_H(F(t,x_n(t,h)),F(t,x_{n+1}(t,h)))\\&\<\alpha(t)|x_n(t,h)-x_{n+1}(t,h)|<\alpha(t)(\beta_{n+1}(h)(t))^{\frac{1}{p}}
\end{align*}
a.e. in $I$. Imitating previous arguments one can show that $H_{n+1}(h)\neq\varnothing$ (applying these to the nonempty closed decomposable set $K:=\{f_n(\cdot,h)\}-N_F^1(x_{n+1}(\cdot,h))$). Moreover, $H_{n+1}(h)\subset L^p(I,E)$. Let $g_{n+1}\colon C(I,E)\to L^p(I,E)$ be a continuous selection of the multimap $H_{n+1}$. Now, if we put $f_{n+1}(t,h):=g_{n+1}(h)(t)$ for $(t,h)\in I\times C(I,E)$, then it is obvious that the function $f_{n+1}$ possesses properties (i)-(iii). \par Making use of inequality \eqref{intbeta} we can easily estimate
\begin{align*}
||x_{n+1}(\cdot,h)-x_n(\cdot,h)||&\<\sup_{t\in I}||k(t,\cdot)||_q||f_n(\cdot,h)-f_{n-1}(\cdot,h)||_p\<M\left(\int_0^T\alpha(t)^p\beta_n(h)(t)\,dt\right)^{\frac{1}{p}}\\&\<\left(M^{(n+1)p}\left(\int_0^T\gamma(h)(t)\frac{(m(T)-m(t))^n}{n!}\,dt+T\eps_{n+1}\frac{m(T)^n}{n!}\right)\right)^{\frac{1}{p}}\\&\<\frac{M^{n+1}||\alpha||_p^n}{(n!)^{\frac{1}{p}}}\left(||\gamma(h)||_1+T\eps\right)^{\frac{1}{p}}.
\end{align*}
Since the series $\sum_{n=1}^\infty\frac{M^{n+1}||\alpha||_p^n}{(n!)^{\frac{1}{p}}}$ is convergent, the sequences $(x_n(\cdot,h))_\n$ and $(f_n(\cdot,h))_\n$ are fundamental and ultimately converge to some $x(\cdot,h)\in C(I,E)$ and $f(\cdot,h)\in L^p(I,E)$ respectively. Moreover, sequences $(h\mapsto x_n(\cdot,h))_\n$ and $(h\mapsto f_n(\cdot,h))_\n$ tend locally uniformly to the limit functions $h\mapsto x(\cdot,h)$ and $h\mapsto f(\cdot,h)$, because $||\gamma(\cdot)||_1$ is locally bounded. Therefore mappings $h\mapsto x(\cdot,h)$ and $h\mapsto f(\cdot,h)$ are also continuous.\par Taking into account the estimation
\begin{align*}
d(f(t,h),F(t,x(t,h)))&\<d(f_n(t,h),F(t,x_n(t,h)))+d_H(F(t,x(t,h)),F(t,x_n(t,h)))\\&+|f_n(t,h)-f(t,h)|\<\alpha(t)|x_n(t,h)-x(t,h)|+|f_n(t,h)-f(t,h)|
\end{align*}
a.e. in $I$ and almost everywhere convergence of $(f_n(\cdot,h))_\n$ along some subsequence, we infer that $f(t,h)\in F(t,x(t,h))$ for each $h\in C(I,E)$ and for a.a. $t\in I$. Passing to the limit in (iv), uniformly with respect to $t$, we obtain \[x(\cdot,h)=\lim_{n\to\infty}x_n(\cdot,h)=\lim_{n\to\infty}h+V(f_n(\cdot,h))=h+V(f(\cdot,h))\in h+V\circ N_F^p(x(\cdot,h)),\] i.e. $x(\cdot,h)\in S_{\!F}^p(h)$, completing the proof.
\end{proof}
It is obvious that under the conditions of Theorem \ref{geometry}. the sum $\bigcup_{h\in M}S_{\!F}^p(h)$ is connected, provided $M$ is connected. We can say more about the image $S_{\!F}^p(M)$ if $M\subset C(I,E)$ is convex, namely:
\begin{theorem}
Let $p\in[1,\infty)$, while the space $E$ is reflexive for $p\in(1,\infty)$. Assume that conditions $({\bf F})$ and $(\Ka_1)$-$(\Ka_4)$ are fulfilled. Then the image $S^p_{\!F}(M)$ of a compact convex set $M$ by the solution set map $S^p_{\!F}$ is compact acyclic.
\end{theorem}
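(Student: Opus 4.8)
\section*{Proof proposal}

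The plan is to realize the restriction $S^p_{\!F}|_M$ as an admissible map and then run the Vietoris--Begle mapping theorem on both projections of its graph. First I would record that $S^p_{\!F}\colon C(I,E)\map C(I,E)$ is a $J$-map: it is upper semicontinuous (as established in the course of \cite{pietkun}) and, by Theorem \ref{geometry}., each value $S^p_{\!F}(h)$ is an $R_\delta$-set, hence compact and acyclic. Consequently the image $S^p_{\!F}(M)=\bigcup_{h\in M}S^p_{\!F}(h)$ of the compact set $M$ under an upper semicontinuous compact-valued map is compact, which already settles the compactness half of the assertion, and the graph
\[\Gamma:=\{(h,x)\in M\times C(I,E)\colon x\in S^p_{\!F}(h)\}\]
is then a compact subset of $M\times S^p_{\!F}(M)$.

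Next I would consider the two projections $p\colon\Gamma\to M$, $p(h,x)=h$, and $q\colon\Gamma\to S^p_{\!F}(M)$, $q(h,x)=x$, both continuous, surjective, and proper since $\Gamma$ is compact. The fibre $p^{-1}(h)$ is homeomorphic to $S^p_{\!F}(h)$, an $R_\delta$-set, hence acyclic; thus $p$ is a Vietoris map and the Vietoris--Begle theorem yields an isomorphism $p_*\colon H_*(\Gamma)\xrightarrow{\cong}H_*(M)$. Since $M$ is compact and convex it is contractible, hence acyclic, and therefore $\Gamma$ is acyclic as well.

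The decisive point is to show that $q$ is likewise a Vietoris map, i.e. that each fibre $q^{-1}(x)$ is acyclic. Unwinding the definition, $x\in S^p_{\!F}(h)$ holds precisely when $h=x-V(w)$ for some $w\in N_F^p(x)$, whence the fibre $q^{-1}(x)$ is homeomorphic (through the affine projection $(h,x)\mapsto h$) to
\[\{h\in M\colon x\in S^p_{\!F}(h)\}=M\cap\bigl(x-V(N_F^p(x))\bigr).\]
By $(\F_1)$ the values of $F$ are convex, so $N_F^p(x)$ is convex; as $V$ is linear, $x-V(N_F^p(x))$ is convex, and its intersection with the convex set $M$ is convex, nonempty (because $x\in S^p_{\!F}(M)$), and compact as a closed subset of $\Gamma$. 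A nonempty compact convex subset of $C(I,E)$ is contractible, hence acyclic. Thus $q$ is a Vietoris map and $q_*\colon H_*(\Gamma)\xrightarrow{\cong}H_*(S^p_{\!F}(M))$ is an isomorphism; combined with the acyclicity of $\Gamma$ from the previous paragraph, this forces $S^p_{\!F}(M)$ to be acyclic.

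I expect the genuine obstacle to be exactly the verification that the fibres of $q$ are acyclic: everything hinges on recognizing $\{h\in M\colon x\in S^p_{\!F}(h)\}$ as a convex set, which rests on the convexity of the values of $F$ together with the linearity of the Volterra operator $V$. The projection-onto-$M$ half of the argument is essentially automatic once $S^p_{\!F}$ is known to be a $J$-map, and the compactness of $\Gamma$ and of $S^p_{\!F}(M)$ reduces everything to the compact-metric setting in which the Čech homology functor $H_*(\cdot)$ and the Vietoris--Begle theorem operate.
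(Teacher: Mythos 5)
Your proposal is correct and follows essentially the same route as the paper: compactness from upper semicontinuity of the compact-valued map $S^p_{\!F}$, a Vietoris--Begle argument on the projection of the graph onto $M$ (fibres $S^p_{\!F}(h)$ being $R_\delta$, hence acyclic, by Theorem \ref{geometry}.), and a second Vietoris--Begle argument on the projection onto $S^p_{\!F}(M)$, whose fibres $M\cap\bigl(x-V(N_F^p(x))\bigr)$ are compact convex by the convexity of $N_F^p(x)$ and the linearity of $V$. The paper phrases the second half via an auxiliary multimap $G(x)=M\cap(S^p_{\!F})^{-1}(\{x\})$ and an explicit homeomorphism between the two graphs, but this is only a cosmetic difference from your single-graph, two-projections formulation.
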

\begin{proof}
The image $S^p_{\!F}(M)$ is obviously compact since the solution set map $S^p_{\!F}$ is compact valued upper semicontinuous multifunction (\cite[Prop. 4.]{pietkun}). In view of Theorem \ref{geometry}. it is an acyclic multimap. Therefore the projection $\pi_1^{S_{\!F}^p}\colon\Gamma(S_{\!F}^p)\Rightarrow M$ of the graph $\Gamma(S_{\!F}^p)=\{(h,x)\in M\times C(I,E)\colon x\in S_{\!F}^p(h)\}$ onto the set $M$ is a Vietoris map. Applying Vietoris-Begle theorem (\cite[Th. 8.7.]{gorn}) we see that the induced map $(\pi_1^{S_{\!F}^p})_*\colon H_*(\Gamma(S_{\!F}^p))\longrightarrow^{\hspace{-0.35cm}\approx}\hspace{0.2cm}H_*(M)$ is an isomorphism.\par Observe that the solution set map $S_{\!F}^p$ has convex fibers. Indeed, take $h_1,h_2\in(S_{\!F}^p)^{-1}(\{x\})$ and $\lambda\in[0,1]$. Then there are $w_1,w_2\in N_F^p(x)$ such that $h_1+V(w_1)=x=h_2+V(w_2)$. Thus $x=\lambda x+(1-\lambda)x=\lambda h_1+(1-\lambda)h_2+V(\lambda w_1+(1-\lambda)w_2)\in\lambda h_1+(1-\lambda)h_2+V\circ N_F^p(x)$, since $N_F^p$ is convex-valued. Therefore $\lambda h_1+(1-\lambda)h_2\in(S_{\!F}^p)^{-1}(\{x\})$.\par Define multimap $G\colon S_{\!F}^p(M)\map M$ by $G(x):=\{h\in M\colon x\in S_{\!F}^p(h)\}$. Clearly $G$ is a compact valued upper semicontinuous map. Moreover, it has convex values, because $G(x)=M\cap(S_{\!F}^p)^{-1}(\{x\})$. Since the projection $\pi_1^G\colon\Gamma(G)\Rightarrow S_{\!F}^p(M)$ of the graph of $G$ onto the domain $S_{\!F}^p(M)$ is a Vietoris map, the induced map $(\pi_1^G)_*\colon H_*(\Gamma(G))\longrightarrow^{\hspace{-0.35cm}\approx}\hspace{0.2cm}H_*(S_{\!F}^p(M))$ gives an isomorphism.\par It is easy to indicate the homeomorphism between the graphs $\Gamma(S_{\!F}^p)$ and $\Gamma(G)$. Indeed, let $f\colon\Gamma(S_{\!F}^p)\to\Gamma(G)$ be the product mapping $f=\pi_2^{S_{\!F}^p}\times\pi_1^{S_{\!F}^p}$, i.e. $f(h,x)=(\pi_2^{S_{\!F}^p}(h,x),\pi_1^{S_{\!F}^p}(h,x))=(x,h)$. Then $\pi_2^G\times\pi_1^G\colon\Gamma(G)\to\Gamma(S_{\!F}^p)$ provides the continuous inverse of $f$. Consequently, $f_*\colon H_*(\Gamma(S_{\!F}^p))\longrightarrow^{\hspace{-0.35cm}\approx}\hspace{0.2cm} H_*(\Gamma(G))$ gives an isomorphism of \v{C}ech homology spaces.\par Combining above findings, we see that the homologies $H_*(S_{\!F}^p(M))$ and $H_*(M)$ are isomorphic. Therefore the set $S^p_{\!F}(M)$ must be acyclic.
\end{proof}

Let us pass on to description of the properties of the selection set map ${\mathcal S}^p_{\!F}\colon C(I,E)\map L^p(I,E)$. We start with a fairly obvious:
\begin{proposition}
Let $p\in[1,\infty)$, while the space $E$ is reflexive for $p\in(1,\infty)$. Assume that conditions $({\bf F})$ and $(\Ka_5)$-$(\Ka_6)$ are fulfilled. Then the multivalued map ${\mathcal S}^p_{\!F}$ is weakly upper semicontinuous and has weakly compact values.
\end{proposition}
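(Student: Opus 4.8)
The plan is to establish both assertions at once by verifying the sequential form of weak upper semicontinuity that matches the terminology used above for strongly upper semicontinuous maps: given a convergent sequence $h_n\to h_0$ in $C(I,E)$ and arbitrary $w_n\in{\mathcal S}^p_{\!F}(h_n)$, I would produce a subsequence with $w_n\rightharpoonup w_0$ for some $w_0\in{\mathcal S}^p_{\!F}(h_0)$. The weak compactness of the value ${\mathcal S}^p_{\!F}(h_0)$ is then the special case $h_n\equiv h_0$, since by the Eberlein--\v Smulian theorem weak sequential compactness and weak compactness coincide. Writing $x_n:=h_n+V(w_n)\in S^p_{\!F}(h_n)$ for the solution realised by $w_n$, the whole argument reduces to controlling the two sequences $(w_n)_\n$ and $(x_n)_\n$ and passing to the limit in the inclusion $w_n(t)\in F(t,x_n(t))$.

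First I would derive a priori bounds. Since $(h_n)_\n$ is convergent it is bounded, so a standard Gronwall estimate based on $(\F_4)$ bounds the norms $\|x_n\|$ by a constant $R$ independent of $n$; then $|w_n(t)|\<c(t)(1+R)$ a.e. by $(\F_4)$, giving a fixed dominating function $g:=c\,(1+R)\in L^p(I,\mathbb{R})$ for the whole sequence $(w_n)_\n$. For $p\in(1,\infty)$ reflexivity of $E$ makes $L^p(I,E)$ reflexive, so the bounded sequence $(w_n)_\n$ is relatively weakly compact. For $p=1$ the domination by $g$ yields uniform integrability, and once the pointwise relative compactness of $\{w_n(t)\}_\n$ is established (see below) the relative weak compactness of $(w_n)_\n$ in $L^1(I,E)$ follows from \cite[Proposition 11]{ulger}. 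In either case I extract a subsequence with $w_n\rightharpoonup w_0$.

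Next I would prove the relative compactness of $(x_n)_\n$ in $C(I,E)$. Setting $\varphi(t):=\chi(\{x_n(t)\}_\n)$ and using $w_n(s)\in F(s,x_n(s))$ together with $(\F_5)$ and the Heinz inequality (\cite[Cor.3.1]{heinz}, compare \cite[Th.3.]{pietkun}) one gets
\[\varphi(t)\<\chi(\{h_n(t)\}_\n)+2\int_0^t\|k(t,s)\|_{\mathcal L}\,\eta(s)\varphi(s)\,ds=2\int_0^t\|k(t,s)\|_{\mathcal L}\,\eta(s)\varphi(s)\,ds,\]
whence $\varphi\equiv 0$ by a Gronwall-type argument; this also supplies the pointwise relative (weak) compactness of $\{w_n(t)\}_\n\subset F(t,\{x_n(t)\}_\n)$ required for the case $p=1$. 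Equicontinuity of $(x_n)_\n$ follows from the domination $g$ together with $(\Ka_6)$, so the Arzel\`a--Ascoli theorem gives a subsequence $x_n\rightrightarrows x_0$. Testing $V(w_n)(t)$ against $e^*\in E^*$ and using $(\Ka_5)$ to see that $s\mapsto{\bf 1}_{[0,t]}(s)k(t,s)^*e^*$ lies in $L^q(I,E^*)$ shows $V(w_n)(t)\rightharpoonup V(w_0)(t)$, hence $x_0(t)=h_0(t)+V(w_0)(t)$, i.e. $x_0=h_0+V(w_0)$.

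Finally I would identify the limit. Applying Theorem \ref{convergence}. for a.a. fixed $t$ to the upper hemicontinuous map $F(t,\cdot)$, with $G_n(t):=\{x_n(t)\}$, $G(t):=\{x_0(t)\}$ (so $d_H(G_n(t),G(t))=|x_n(t)-x_0(t)|\to 0$) and $y_n:=w_n\rightharpoonup w_0$, and noting that $w_n(t)\in F(t,x_n(t))$ trivially satisfies hypothesis (iii), I obtain $w_0(t)\in\overline{\co}F(t,x_0(t))=F(t,x_0(t))$ a.e. by $(\F_1)$. Thus $w_0\in N_F^p(x_0)=N_F^p(h_0+V(w_0))$, that is $w_0\in{\mathcal S}^p_{\!F}(h_0)$, which proves both the weak upper semicontinuity and, for $h_n\equiv h_0$, the weak compactness of the values. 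I expect the main technical obstacle to be the case $p=1$: there reflexivity is unavailable, and weak compactness must be extracted from the interplay of the domination $g$ (uniform integrability) with the pointwise compactness coming from $(\F_5)$ via \cite[Proposition 11]{ulger}; a secondary care point is the legitimacy of invoking the convergence theorem with the genuinely $t$-dependent family $F(t,\cdot)$.
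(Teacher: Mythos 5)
Your argument is correct in substance, but it takes a genuinely different route from the paper. The paper's own proof is a two-line reduction: it observes that ${\mathcal S}^p_{\!F}(h)=\fix(N_F^p(h+V(\cdot)))$ sits inside the composition of the solution set map $S^p_{\!F}$ (already known to be upper semicontinuous with compact values, \cite[Prop.~4]{pietkun}) with the Nemytskii operator $N_F^p$ (already known to be weakly upper semicontinuous with weakly compact values, \cite[Prop.~1]{pietkun}), and the extra constraint $x=h+V(w)$ is closed; the claim then follows by composing these two cited results. You instead re-prove everything from first principles: a priori bounds via $(\F_4)$ and Gronwall, relative weak compactness of $(w_n)_\n$ (reflexivity of $L^p(I,E)$ for $p>1$, \cite[Prop.~11]{ulger} for $p=1$), compactness of $(x_n)_\n$ via $(\F_5)$ and the Heinz inequality, and identification of the limit through Theorem~\ref{convergence} applied with singleton sets $G_n(t)=\{x_n(t)\}$. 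This is essentially a re-derivation of the content of Propositions 1 and 4 of \cite{pietkun}, so it is longer but self-contained, and it has the merit of making explicit exactly where reflexivity (for $p>1$) versus \"Ulger's criterion (for $p=1$) enters. Two small points to tighten: in the Gronwall step for $\varphi(t)=\chi(\{x_n(t)\}_\n)$ you should first apply H\"older to replace the $t$-dependent kernel $\|k(t,s)\|_{\mathcal L}\eta(s)$ by the $t$-independent bound $M=\sup_{t\in I}\|k(t,\cdot)\|_q$, i.e.\ run Gronwall on $\varphi^p$ against the $L^1$ kernel $(2M)^p\eta(\cdot)^p$, since $(\Ka_5)$--$(\Ka_6)$ give no pointwise majorant of $\|k(t,s)\|_{\mathcal L}$; and your use of Theorem~\ref{convergence} with a $t$-dependent family $F(t,\cdot)$ is legitimate because that proof is pointwise in $t$ (the paper itself applies it this way in the proof of Theorem~\ref{geometry}), but strictly speaking it is an extension of the statement as written.
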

\begin{proof}
The claim is a straightforward consequence of upper semicontinuity of the solution set map $S^p_{\!F}$ and weak upper semicontinuity of the Nemytskii operator $N_F^p$ (Proposition 4. and Proposition 1. in \cite{pietkun} respectively).
\end{proof}
\begin{theorem}\label{ar}
Let $E$ be a separable Banach space. Assume that conditions $({\bf H})$ and $(\Ka_5)$-$(\Ka_6)$ are fulfilled. Then the set ${\mathcal S}^1_{\!F}(h)$ is an absolute retract for every $h\in C(I,E)$. In other words, the selection set map ${\mathcal S}^1_{\!F}$ is an AR-valued multimap. Moreover, the selection set map ${\mathcal S}^1_{\!F}$ admits a continuous singlevalued selection.
\end{theorem}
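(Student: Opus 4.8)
The plan is to realise ${\mathcal S}^1_{\!F}(h)$ as the fixed point set of a decomposable-valued contraction and then to prove that such fixed point sets are absolute retracts. By definition ${\mathcal S}^1_{\!F}(h)=\fix(G_1(h,\cdot))$, and Lemma \ref{contraction}, applied with $p=1$, provides an equivalent (hence complete) norm $|||\cdot|||_1$ on $L^1(I,E)$ in which the map $\Phi_h:=G_1(h,\cdot)\colon L^1(I,E)\map L^1(I,E)$ is Hausdorff continuous and contractive with constant $\frac{1}{2}$. Since $\Phi_h(u)=N_F^1(h+V(u))$ is a Nemytskii image, its values are nonempty, closed and decomposable, and the fixed point set of a continuous closed-valued map is closed. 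Thus $\fix(\Phi_h)$ is nonempty (by the Covitz--Nadler theorem, or directly from the construction below) and, as a subset of $L^1(I,E)$, metrizable; it is a genuine candidate for an AR.

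To show that $\fix(\Phi_h)$ is an absolute retract I would verify the absolute-extensor property: for an arbitrary metric space $W$, a closed subset $W_0\subset W$ and a continuous map $g\colon W_0\to\fix(\Phi_h)$, I construct a continuous extension $\bar g\colon W\to\fix(\Phi_h)$. First, since $L^1(I,E)$ is a Banach space and hence an AR, extend $g$ by Dugundji's theorem to a continuous $g_0\colon W\to L^1(I,E)$. Next fix continuous tolerance functions $\eps_n\colon W\to[0,\infty)$ with $\eps_n|_{W_0}\equiv 0$ (e.g. built from $\mbox{dist}(\cdot,W_0)$) and with $\sum_n\eps_n$ suitably controlled. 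Setting $u_0:=g_0$, I would iteratively apply the decomposable selection results of \cite{brescol} (Proposition 4. and Theorem 3., exactly as in the construction of the selection of $S^p_{\!F}$ carried out above) to produce continuous maps $u_{n+1}\colon W\to L^1(I,E)$ with $u_{n+1}(w)\in\Phi_h(u_n(w))$ and with the pointwise increment $|u_{n+1}(w)(t)-u_n(w)(t)|$ bounded by an $\alpha$-weighted quantity plus $\eps_n(w)$. The contraction constant $\frac{1}{2}$ from Lemma \ref{contraction} renders $(u_n(w))_\n$ Cauchy, locally uniformly in $w$, with limit $\bar g(w)\in\fix(\Phi_h)$ depending continuously on $w$.

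The point of choosing $\eps_n$ to vanish on $W_0$ is consistency: for $w\in W_0$ one has $u_0(w)=g(w)\in\fix(\Phi_h)$, so $d(u_n(w),\Phi_h(u_n(w)))=0$, and since $\eps_n(w)=0$ the selection at each step may be taken with zero increment, giving $u_n(w)=g(w)$ for all $\n$ and hence $\bar g|_{W_0}=g$. This establishes the absolute-extensor property, so $\fix(\Phi_h)={\mathcal S}^1_{\!F}(h)$ is an AR. For the continuous singlevalued selection of ${\mathcal S}^1_{\!F}$ it suffices to recall the preceding theorem on the continuous selection of $S^p_{\!F}$: under the identical hypotheses $({\bf H})$ and $(\Ka_5)$--$(\Ka_6)$, its proof constructs for $p=1$ a continuous map $h\mapsto f(\cdot,h)\in L^1(I,E)$ satisfying $f(\cdot,h)\in N_F^1(x(\cdot,h))$ with $x(\cdot,h)=h+V(f(\cdot,h))$, i.e. $f(\cdot,h)\in\fix(G_1(h,\cdot))={\mathcal S}^1_{\!F}(h)$; thus $h\mapsto f(\cdot,h)$ is the desired selection. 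Equivalently, one may rerun the iteration above with parameter $h$, invoking the joint continuity of $G_1$ from Lemma \ref{contraction}.

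The main obstacle is the absolute-extensor step. The verification that $\Phi_h$ is a decomposable-valued contraction is immediate from Lemma \ref{contraction}, and nonemptiness is classical; the real difficulty is to assemble the iterated decomposable selections so that the limit simultaneously lands in $\fix(\Phi_h)$ away from $W_0$---forced by the contraction driving $d(u_n(w),\Phi_h(u_n(w)))\to 0$---and coincides with the prescribed $g$ on $W_0$, secured by the vanishing of $\eps_n$ there. Keeping every $u_n$ continuous in $w$ while checking, at each stage, that the constraint multimaps $H_n(w):=\{v\in\Phi_h(u_n(w))\colon |v(t)-u_n(w)(t)|<\varrho_n(w)(t)\mbox{ a.e.}\}$ remain lower semicontinuous with nonempty closed decomposable values (nonemptiness following from the contraction estimate together with $\eps_n$) is the technical heart of the argument.
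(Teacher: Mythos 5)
Your reduction of the problem to Lemma \ref{contraction} is exactly the paper's: ${\mathcal S}^1_{\!F}(h)=\fix\left(N_F^1(h+V(\cdot))\right)$ is the fixed point set of a Hausdorff-continuous, decomposable-closed-valued contraction in the renormed $L^1(I,E)$. Where you diverge is in what you do with that contraction. The paper simply invokes \cite[Th.1.]{bres} (Bressan--Cellina--Fryszkowski), in its \emph{parametrized} form: it yields a single jointly continuous $g\colon C(I,E)\times L^1(I,E)\to L^1(I,E)$ with $g(h,u)\in\fix\left(N_F^1(h+V(\cdot))\right)$ and $g(h,u)=u$ on the fixed point set, so that freezing $h$ gives a retraction of $L^1(I,E)$ onto ${\mathcal S}^1_{\!F}(h)$ (hence AR), and freezing $u$ gives the continuous selection of ${\mathcal S}^1_{\!F}$ in one stroke. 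You instead propose to re-derive that cited theorem by hand --- iterated Bressan--Colombo selections with tolerances $\eps_n$ vanishing on $W_0$ --- which is indeed the known proof of \cite{bres}, and then to obtain the selection of ${\mathcal S}^1_{\!F}$ separately from the construction in the preceding theorem on selections of $S^p_{\!F}$ (which is legitimate: the $f(\cdot,h)$ built there does lie in ${\mathcal S}^1_{\!F}(h)$ and depends continuously on $h$). Two remarks on your version. First, your absolute-extensor formulation over an \emph{arbitrary} metric space $W$ is both more than you need and slightly hazardous: the decomposable selection theorems of \cite{brescol} require a separable domain, so you should instead take $W=L^1(I,E)$, $W_0=\fix(\Phi_h)$, $g=\mathrm{id}$ and build a retraction; since $E$ is separable this ambient space is separable, and a retract of an AR is an AR. Second, your constraint sets $H_n(w)$ are defined by a strict inequality, which becomes empty on $W_0$ where $\varrho_n(w)\equiv 0$; this is precisely the delicate point that \cite{bres} resolves, and it deserves more than a parenthetical. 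In sum: correct in outline, but you are reconstructing the proof of the paper's key citation rather than using it, at the cost of these technical obligations.
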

\begin{proof}
Observe that the Banach spaces $\left(C(I,E),||\cdot||\right)$ and $\left(L^1(I,E),|||\cdot|||_1\right)$, with $|||\cdot|||_1$ given by \eqref{norm}, are separable. The set-valued map $\left(C(I,E),||\cdot||\right)\times\left(L^1(I,E),|||\cdot|||_1\right)\ni(h,u)\mapsto N_F^1(h+V(u))\subset\left(L^1(I,E),|||\cdot|||_1\right)$ has nonempty closed bounded and decomposable values. In view of Lemma \ref{contraction}. it is continuous and contractive with respect to the second variable. It follows by \cite[Th.1.]{bres} that there exists a continuous function $g\colon C(I,E)\times L^1(I,E)\to L^1(I,E)$ such that $g(h,u)\in\fix\left(N_F^1(h+V(\cdot))\right)$ for all $u\in L^1(I,E)$ and $g(h,u)=u$ for all $u\in\fix\left(N_F^1(h+V(\cdot))\right)$. Therefore, the set ${\mathcal S}^1_{\!F}(h)$ is an AR-space as a retract of the normed space $L^1(I,E)$. With the second argument fixed, the function g becomes a continuous selection of the selection set map ${\mathcal S}^1_{\!F}$.
\end{proof}
\begin{theorem}\label{6}
Let $E$ be a separable Banach space and $p\in(1,\infty)$. Assume that $F$ has convex values and satisfies $({\bf H})$. Let $k$ satisfy conditions $(\Ka_5)$-$(\Ka_6)$. Then the selection set map ${\mathcal S}^p_{\!F}$ is an AR-valued multimap.
\end{theorem}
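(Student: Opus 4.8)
The plan is to reduce the assertion, for each fixed $h\in C(I,E)$, to a known topological property of the fixed point set of a convex-valued multivalued contraction. First I would fix $h\in C(I,E)$ and consider the multimap $\Phi:=G_p(h,\cdot)=N_F^p(h+V(\cdot))\colon L^p(I,E)\map L^p(I,E)$, whose fixed point set is by definition exactly ${\mathcal S}^p_{\!F}(h)$. Since $E$ is separable, $F$ satisfies $({\bf H})$ and $k$ satisfies $(\Ka_5)$-$(\Ka_6)$, Lemma \ref{contraction}. applies verbatim: in the equivalent norm $|||\cdot|||_p$ defined by \eqref{norm}, the map $\Phi$ is Hausdorff continuous and contractive, with contraction constant $2^{-p^{-1}}<1$. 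Thus I would have at my disposal a genuine multivalued contraction of the Banach space $\bigl(L^p(I,E),|||\cdot|||_p\bigr)$ into itself, and the whole problem is to analyse its fixed point set.

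The key structural step is to record the qualities of the values of $\Phi$. The set $\Phi(u)=N_F^p(h+V(u))$ is nonempty and closed for every $u$ (this is already part of the content of Lemma \ref{contraction}., together with the integrability afforded by $(\h_3)$ and $(\h_4)$). The decisive new ingredient, absent in the case $p=1$, is convexity: since $F$ is now assumed to take convex values, the Nemytskij operator $N_F^p$ is convex-valued, and hence so is $\Phi$. This is precisely the point at which the present argument must diverge from that of Theorem \ref{ar}.: for $p=1$ one exploits decomposability together with the Bressan-type parametric retraction theorem \cite{bres}, a device that is intrinsically tied to $L^1$; for $p\in(1,\infty)$ that machinery is no longer available, and convexity of the values has to take over its role.

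Granting these properties, the plan is to invoke the theorem of Ricceri asserting that the fixed point set of a multivalued contraction with nonempty closed convex values, defined on a convex subset of a normed space, is an absolute retract. Here the domain is the whole space $L^p(I,E)$, which is convex and complete, and $\Phi$ is exactly such a contraction in the norm $|||\cdot|||_p$; therefore ${\mathcal S}^p_{\!F}(h)=\fix(\Phi)$ is an AR, as claimed. The main obstacle is not computational, since the contraction estimate is already delivered by Lemma \ref{contraction}., but conceptual: one must be certain that convexity of $F$ is exactly what compensates for the loss, when $p>1$, of the decomposable-selection technology available for $p=1$. This also explains why the conclusion here is weaker than in Theorem \ref{ar}.: Ricceri's theorem is applied separately for each $h$ and yields no control on the dependence of the retraction on the parameter, so one cannot expect a globally continuous single-valued selection of the whole map ${\mathcal S}^p_{\!F}$, only the AR-valuedness stated.
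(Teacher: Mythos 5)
Your proposal is correct and follows the paper's own argument exactly: fix $h$, observe via Lemma \ref{contraction}. that $G_p(h,\cdot)=N_F^p(h+V(\cdot))$ is a multivalued contraction in the equivalent norm $|||\cdot|||_p$ with nonempty closed convex values, and apply Ricceri's theorem \cite{riccieri} to conclude that its fixed point set ${\mathcal S}^p_{\!F}(h)$ is an absolute retract. Your remarks on convexity replacing the decomposable-selection machinery of the $p=1$ case accurately reflect the distinction between this result and Theorem \ref{ar}.
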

\begin{proof}
In the context of the assumptions and content of Lemma \ref{contraction}. the set-valued operator $G_p(h,\cdot)\colon\left(L^p(I,E),|||\cdot|||_p\right)\map\left(L^p(I,E),|||\cdot|||_p\right)$, given by $G_p(h,\cdot)(u)=N_F^p(h+V(u))$ is contractive, with convex closed values. Now Theorem 1. in \cite{riccieri} applied to the space $L^p(I,E)$ with the norm $|||\cdot|||_p$ gives the desired result.
\end{proof}
\begin{remark}
There is a strong presumption that the thesis of Theorem \ref{ar}. remains true also for exponents $p>1$. This obviously depends on whether the results of papers \cite{bres, brescol} can be expressed in the setting of $p$-integrable maps.
\end{remark}
\begin{corollary}
Let $E$ be a separable Banach space and $p\in[1,\infty)$. Assume that $F$ satisfies $({\bf H})$ and $F$ has convex values for $p\in(1,\infty)$. Then
\begin{itemize}
\item[(i)]  the solution set $S_{\!F}^p(h)$ is arcwise connected, provided that $k$ satisfies conditions $(\Ka_5)$-$(\Ka_6)$,
\item[(ii)]  the solution set $S_{\!F}^p(h)$ is an absolute retract, provided that $k$ satisfies conditions $(\Ka_1)$-$(\Ka_4)$.
\end{itemize}
\end{corollary}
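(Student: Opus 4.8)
The plan is to obtain both statements by transporting the absolute-retract structure of the \emph{selection} set ${\mathcal S}^p_{\!F}(h)$ along the affine operator $\Psi\colon L^p(I,E)\to C(I,E)$, $\Psi(w):=h+V(w)$. The operator $\Psi$ is continuous and, by the very definitions, $\Psi\bigl({\mathcal S}^p_{\!F}(h)\bigr)=S^p_{\!F}(h)$: indeed $x\in S^p_{\!F}(h)$ precisely when $x=h+V(w)$ for some $w\in N_F^p(x)$, i.e. for some $w\in\fix\bigl(N_F^p(h+V(\cdot))\bigr)={\mathcal S}^p_{\!F}(h)$. For (i) I would simply invoke Theorem \ref{ar} (for $p=1$) and Theorem \ref{6} (for $p\in(1,\infty)$, where $F$ is convex-valued) to see that ${\mathcal S}^p_{\!F}(h)$ is an AR, hence contractible and in particular arcwise connected; since $S^p_{\!F}(h)$ is its continuous image under $\Psi$ and arcwise connectedness is preserved by continuous maps, (i) follows already under $(\Ka_5)$--$(\Ka_6)$.

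For (ii) the first observation is that the stronger hypotheses $(\Ka_1)$--$(\Ka_4)$ already force $(\Ka_5)$--$(\Ka_6)$: from $(\Ka_1)$ and $(\Ka_4)$ one gets $\|k(t,s)\|_{\mathcal L}\le\|k(s,s)\|_{\mathcal L}+T\mu(s)$ with $\mu\in L^q$, and $\|k(s,s)\|_{\mathcal L}$ is bounded on $I$ by the continuity in $(\Ka_3)$; the same Lipschitz-in-$t$ estimate yields the $L^q$-continuity demanded by $(\Ka_6)$. Hence ${\mathcal S}^p_{\!F}(h)$ is again an AR. Moreover $(\Ka_1)$--$(\Ka_4)$ guarantee, through Lemma 2 in \cite{pietkun} (recorded in the Remark following Lemma \ref{contraction}), that $V$ is a monomorphism, so $\Psi$ restricts to a continuous \emph{bijection} of the AR ${\mathcal S}^p_{\!F}(h)$ onto $S^p_{\!F}(h)$.

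For $p\in(1,\infty)$, where $F$ is convex-valued, I would avoid the bijection altogether and argue directly on $C(I,E)$: the multimap $x\mapsto h+V\bigl(N_F^p(x)\bigr)$ has nonempty closed \emph{convex} values (convexity survives because $V$ is linear), and, after reweighting the maximum norm in Bielecki fashion using the bound $\|k(t,s)\|_{\mathcal L}\le\kappa(s)$ with $\kappa\alpha\in L^1$ (Hölder, since $p^{-1}+q^{-1}=1$), it becomes a multivalued contraction. Its fixed point set is exactly $S^p_{\!F}(h)$, so Theorem 1 of \cite{riccieri} delivers the AR property at once.

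The genuinely delicate case is $p=1$ with $F$ possibly non-convex, where only the transport route is available and the task reduces to upgrading the continuous bijection $\Psi|_{{\mathcal S}^1_{\!F}(h)}$ to a \emph{homeomorphism}. This is the step I expect to be the main obstacle, because recovering $w$ from $h+V(w)$ inverts a first-kind Volterra (smoothing) operator and is not continuous on $C(I,E)$ at large. My plan is to exploit the rigidity carried by the selection set: given $x_n\to x$ in $S^1_{\!F}(h)$ with associated unique selections $w_n,w$, the Lipschitz bound $(\h_3)$ lets me split $w_n=\pi_n+\rho_n$ with $\pi_n\in N_F^1(x)$ and $\rho_n\to 0$ in $L^1$, reducing everything to a uniform injectivity of $V$ on differences of selections of the single multifunction $F(\cdot,x(\cdot))$; I would then differentiate the relation $V(w_n)\to V(w)$ as in the parabolic-case argument, pass via $(\Ka_1)$, $(\Ka_3)$, $(\Ka_4)$ to the pointwise recovery identity $k(t,t)w(t)+\int_0^t\frac{\partial}{\partial t}k(t,s)w(s)\,ds=\frac{d}{dt}(x-h)(t)$, and close with a Gronwall estimate forcing $w_n\to w$. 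Verifying that this recovery is stable under mere uniform convergence of the $x_n$ is exactly where the argument must be handled with care.
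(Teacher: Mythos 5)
Part (i) of your proposal coincides with the paper's argument: $S_{\!F}^p(h)=h+V\bigl({\mathcal S}_{\!F}^p(h)\bigr)$ is the continuous affine image of the AR ${\mathcal S}_{\!F}^p(h)$ supplied by Theorems \ref{ar} and \ref{6}, hence arcwise connected. Your preliminary observation that $(\Ka_1)$--$(\Ka_4)$ imply $(\Ka_5)$--$(\Ka_6)$ (via $\|k(t,s)\|_{\mathcal L}\le\sup_{\tau\in I}\|k(\tau,\tau)\|_{\mathcal L}+T\mu(s)$) is correct and fills in a detail the paper leaves implicit.

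Part (ii) is where the proposal has a genuine gap. You correctly reduce the problem to upgrading the continuous bijection $V_h=h+V(\cdot)\colon{\mathcal S}_{\!F}^p(h)\to S_{\!F}^p(h)$ to a homeomorphism, and you correctly identify that continuity of the inverse against the sup norm is the obstruction --- but you then leave exactly that step unproven, and your proposed analytic programme (differentiating $V(w_n)\to V(w)$, pointwise recovery, Gronwall) is not carried out and has no evident reason to close, since stability of the recovery $x\mapsto w$ under mere uniform convergence is precisely the failure of bounded invertibility of a first-kind Volterra operator. The paper does not attempt any of this: it re-metrizes the image, equipping $\im V$ with the graph-type norm $\|V(w)\|_{\im}:=\max\{\|V(w)\|,\|w\|_p\}$ (well defined because $V$ is injective under $(\Ka_1)$--$(\Ka_4)$ by \cite[Lem.2]{pietkun}), under which $V^{-1}$ is trivially continuous, and concludes that $S_{\!F}^p(h)$ with the metric $\rho(x,y)=\|x-y\|_{\im}$ is an AR as a homeomorphic image of $\bigl({\mathcal S}_{\!F}^p(h),\|\cdot\|_p\bigr)$. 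The missing idea in your write-up is this change of metric (your worry about the subspace topology of $C(I,E)$ is exactly what the re-metrization sidesteps rather than resolves). Your alternative route for $p\in(1,\infty)$ --- Ricceri applied to $x\mapsto h+V(N_F^p(x))$ in $C(I,E)$ --- also has a gap: Ricceri's theorem requires closed convex values, and $V(N_F^p(x))$ need not be closed in $C(I,E)$ when $E$ is merely separable, since without reflexivity the bounded closed convex set $N_F^p(x)$ is not weakly compact and its image under the bounded operator $V$ can fail to be closed. The paper avoids this by applying Ricceri in $L^p(I,E)$ to $u\mapsto N_F^p(h+V(u))$, whose values are closed there, and only afterwards transporting to $C(I,E)$.
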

\begin{proof}
Ad (i): Observe that the solution set $S^p_{\!F}(h)$ and the continuous (affine) image $h+V\left({\mathcal S}^p_{\!F}(h)\right)$, of the arcwise connected set ${\mathcal S}^p_{\!F}(h)$, coincide.\par Ad (ii): Consider $V\colon L^p(I,E)\to\im V\subset C(I,E)$. Conditions $(\Ka_1)$-$(\Ka_4)$ imply the thesis of \cite[Lem.2.]{pietkun}. Let $||\cdot||_{\im}\colon\im V\to\R{}_+$ be such that $||V(w)||_{\im}:=\max\{||V(w)||,||w||_p\}$. Since $V$ is a monomorphism, the mapping $||\cdot||_{\im}$ determines the norm on the subspace $\im V$. Of course, the normed space $(\im V,||\cdot||_{\im})$ is complete. In view of the bounded inverse theorem the inverse map $V^{-1}\colon(\im V,||\cdot||_{\im})\to(L^p(I,E),||\cdot||_p)$ is continuous.\par Consider metric spaces $\left({\mathcal S}^p_{\!F}(h),d\right)$ and $\left(S^p_{\!F}(h),\rho\right)$ with the metrics $d,\rho$ given by the formulae: $d(w,u):=||w-u||_p$, $\rho(x,y):=||x-y||_{\im}$. Then it is obvious that the mapping $V_h\colon\left({\mathcal S}^p_{\!F}(h),d\right)\to\left(S^p_{\!F}(h),\rho\right)$, such that $V_h(w):=h+V(w)$, is continuous. Define the inverse function $V_h^{-1}\colon\left(S^p_{\!F}(h),\rho\right)\to\left({\mathcal S}^p_{\!F}(h),d\right)$ in the following way $V_h^{-1}(x):=V^{-1}(x-h)$. This map is also continuous, due to the continuity of $V^{-1}$. In conclusion, the solution set $S^p_{\!F}(h)$ of the integral inclusion \eqref{inclusion} must be an absolute retract as an homeomorphic image of another absolute retract ${\mathcal S}^p_{\!F}(h)$.
\end{proof}

\end{document}